\newtheorem{theorem}{Theorem}[section]
\newtheorem*{theorem*}{Theorem}
\newtheorem{corollary}[theorem]{Corollary}
\newtheorem{lemma}[theorem]{Lemma}
\newtheorem{proposition}[theorem]{Proposition}
\theoremstyle{remark}
  \newtheorem{remark}[theorem]{Remark}}
\theoremstyle{definition}
  \newtheorem{definition}[theorem]{Definition}
  \newtheorem{example}[theorem]{Example}
\def\Z{\mathbb{Z}}
\def\C{\mathbb{C}}
\def\CT{\mathbb{C}^*}
\def\T{\mathcal{T}}
\def\M{\mathcal{M}}
\def\N{\mathcal{N}}
\def\S{\mathcal{S}}
\def\V{\mathcal{V}}
\newcommand{\CC}[0]{\ensuremath{\mathbb{C}}}
\newcommand{\ZZ}[0]{\ensuremath{\mathbb{Z}}}
\newcommand{\RR}[0]{\ensuremath{\mathbb{R}}}
\newcommand{\QQ}[0]{\ensuremath{\mathbb{Q}}}
\newcommand{\lC}[0]{\ensuremath{\mathbb{C}}}
\newcommand{\ra}{\rangle}
\newcommand{\la}{\langle}
\newcommand{\spec}[0]{\ensuremath{\operatorname{Spec}}}
\newcommand{\li}{\varprojlim}
\newcommand{\ld}{\varinjlim}
\newcommand*\bigcdot{\mathpalette\bigcdot@{.5}} \newcommand*\bigcdot@[2]{\mathbin{\vcenter{\hbox{\scalebox{#2}{$\m@th#1\bullet$}}}}} \makeatother
\begin{document}

\title{On toric ind-varieties and pro-affine semigroups}

\author{Roberto D\'iaz}
\address{Instituto de Matem\'atica y F\'\i sica, Universidad de Talca,
  Casilla 721, Talca, Chile.}%
\email{robediaz@utalca.cl}

\author{Alvaro Liendo} %
\address{Instituto de Matem\'atica y F\'\i sica, Universidad de Talca,
  Casilla 721, Talca, Chile.}%
\email{aliendo@inst-mat.utalca.cl}

\date{\today}

\thanks{{\it 2000 Mathematics Subject
    Classification}:  14M25; 20M14; 14L99.\\
 \mbox{\hspace{11pt}}{\it Key words}: ind-varieties, toric varieties, filtered semigroups, inductive and projective limits.\\
 \mbox{\hspace{11pt}}Both authors were partially supported by the grant 346300 for IMPAN from the Simons Foundation and the matching 2015-2019 Polish MNiSW fund. The first author was also partially supported by CONICYT-PFCHA/Doctorado Nacional/2016-folio 21161165. The second author was partially supported by Fondecyt project 1200502.}

\begin{abstract}
   An ind-variety is an inductive limit of closed embeddings of algebraic varieties and an ind-group is a group object in the category of ind-varieties. These notions were first introduced by Shafarevich in the study of the automorphism group of affine spaces and have been studied by many authors afterwards. An ind-torus is an ind-group obtained as an inductive limit of closed embeddings of algebraic tori that are also algebraic group homomorphisms. In this paper, we introduce the natural definition of toric ind-varieties as ind-varieties having an ind-torus as an open set and such that the action of the ind-torus on itself by translations extends to a regular action on the whole ind-variety. We are brought to introduce and study pro-affine semigroup that turn out to be unital semigroups isomorphic to closed subsemigroups of the group of arbitrary integer sequences with the product topology such that their projection to the first $i$-th coordinates is finitely generated for all positive integers $i$. Our main result is a duality between the categories of affine toric ind-varieties and the the category of pro-affine semigroups. 
\end{abstract}

\maketitle


\section*{Introduction}

Shafarevich first introduced in \cite{S66,S81} the notion of
infinite-dimensional algebraic varieties and infinite-dimen\-sional
algebraic groups, the so called ind-varieties and ind-groups,
respectively. These notions were later expanded and revisited by
several authors, see for instance \cite{Kum02,Kam03,S12} and the
recent preprint \cite{FK} that includes a detailed exposition of
generalities on ind-varieties and ind-groups.  In this paper, we
generalize the notion of toric varieties to the category of
ind-varieties.

We work over the field of complex numbers $\C$. An \emph{ind-variety} is a set $\mathcal{V}$ together with a filtration $V_1\hookrightarrow V_2\hookrightarrow\dots$ such that $\mathcal{V}=\bigcup V_i$, where each $V_i$ is a finite-dimensional algebraic variety and the inclusions
$\varphi_i\colon V_i{\hookrightarrow} V_{i+1}$ are closed embeddings. Morphisms in the category of ind-varieties are defined in the natural way, see Section~\ref{sec:ind-def} for details. An \emph{ind-group} is a group object in the category of ind-varieties, i.e., it is an ind-variety endowed with a group structure such that the inversion and multiplication maps are morphisms of ind-varieties. The set
\[\left(\lC^*\right)^{\infty}=\left\{\left(a_1,a_2,\dots\right)\mid a_i\in \lC^* \ \text{and}\ a_i\neq1\mbox{ for finitely many } i\right\}\] %
with the canonical structure of ind-variety given by the filtration $\lC^*\stackrel{\varphi_1}{\hookrightarrow}\left(\lC^*\right)^{2}\stackrel{\varphi_2}
{\hookrightarrow}\dots$, 
where $\varphi_i\left(a_1,\dots ,a_i\right)=\left(a_1,\dots ,a_i,1\right)$ for all integer $i>0$, has a natural structure of ind-group where the group law is given by component-wise multiplication. An \emph{algebraic torus} $T$ is an algebraic group isomorphic to $\left(\lC^*\right)^{k}$ for some integer $k\geq 0$. An \emph{ind-torus} $\T$ is an ind-group isomorphic to either an algebraic torus or  $\left(\lC^*\right)^{\infty}$. 

A toric variety $V$ is an irreducible algebraic variety having an
algebraic torus $T$ as an open set and such that the action of $T$ on
itself by translations extends to a regular action on $V$. Toric
varieties can be classified by certain combinatorial devices, see
\cite{T88,F93,CLS11}. This classification allows to translate many
algebro-geometric properties of a toric variety in combinatorial terms
that may then be computed algorithmically. Hence, toric varieties
represent a fertile testing ground for theories in algebraic
geometry. Toric morphisms between toric varieties are characterized by
the property that they restrict to a morphism of algebraic groups
between the corresponding algebraic tori. For affine toric varieties
their combinatorial nature is represented by the fact that the
category of affine toric varieties is dual to the category of affine
semigroups, i.e., finitely generated  semigroups that can be embedded
in $\Z^k$ for some integer $k\geq 0$. By convention, all our
semigroups will be commutative and unital. A unital semigroup is
usually called a monoid. 

In this paper we introduce the natural notion of toric ind-variety. A \emph{toric ind-variety} $\V$ is an ind-variety having an ind-torus $\T$ as an open set and such that the action of $\T$ on itself by translations extends to a regular action on $\V$, see Definition~\ref{def:ind-toric-variety}. Furthermore, toric morphisms between toric ind-varieties are morphisms that restrict to morphisms of ind-groups between the corresponding ind-tori, see Definition~\ref{def:ind-toric-morphism}. 
Our first result in this paper, contained in Theorem~\ref{toric-filtration}, shows that every toric ind-variety can be obtained as an inductive limit of toric varieties. This result allows us to investigate toric ind-varieties applying usual methods from toric geometry.

In Section~\ref{sec:pro-affine-semi} we introduce the natural dual objects to affine toric ind-varieties that we call pro-affine semigroups. We need to develop the theory of pro-affine semigroups from scratch since, up to our knowledge, only the case of pro-finite semigroups has been previously studied in the literature in detail, see for instance \cite{CH83}.  Let $\S$ be a commutative unital semigroup. In analogy with the case of topological algebras  \cite[Section~9.2]{N68} taking into account the lack of the notion of ideal of a semigroup, the natural way to endow the semigroup $\S$ with a topology is with a descending filtration $R_1\supset R_2\supset\dots$ of $\S\times\S$ of equivalence relations on $\S$ that satisfy certain compatibility condition with respect to the semigroup operation allowing to define a semigroup operation in the set of equivalence classes $\S/R_i$, see Section~\ref{sec:pro-affine-semi} for details. We call a semigroup $\S$ endowed with such a filtration a \emph{filtered semigroup}. A \emph{pro-affine semigroup} $\mathcal{S}$ is a filtered semigroup with filtration $R_1\supset R_2\supset\dots$ of compatible equivalence relations in $\S$ that is complete and such that $\S/R_i$ is an affine semigroup, for all integer $i>0$. Our main result concerning pro-affine semigroups is contained in Corollary~\ref{other-def-pro-affine} and is a classification of pro-affine semigroups as semigroups isomorphic to subsemigroups $\S$ of $\Z^\omega$, the group of arbitrary sequences of integers, that are closed in the product topology and such that $\pi_i(\S)$ is finitely generated for all integer $i>0$, where $\pi_i\colon \Z^\omega\rightarrow \Z^i$ is the projection to the first $i$-th coordinates. 

Finally, our main result in this paper is Theorem~\ref{main-theorem} where we show that the category of affine toric ind-varieties with toric morphisms is dual to the category of pro-affine semigroups with homomorphisms of semigroups. 

\smallskip

The contents of the paper is as follows. In Section~\ref{sec:ind-toric} we collect the preliminary notions of toric varieties, inductive and projective limits and ind-varieties required in this paper. In Section~\ref{sec:toric-ind} we introduce toric ind-varieties. In Section~\ref{sec:pro-affine-semi} we define pro-affine semigroups. In Section~\ref{sec:dual-category} we prove the duality of categories that is our main result. Finally, in Section~\ref{sec:examples} we provide some examples to ilustrate our results.

\subsection*{Acknowledgements}  

The authors would like to thank the anonymous referee of this manuscript for useful comments and for spotting a gap in a proof. Part of this work was done during a stay of both authors at IMPAN in Warsaw. We would like to thank IMPAN and the organizers of the Simons semester ``Varieties: Arithmetic and Transformations'' for the hospitality.

\section{Preliminaries}
\label{sec:ind-toric}
 
In this section we recall the notions of toric geometry, injective and projective limits and ind-varieties needed for this paper.

\subsection{Toric varieties} \label{sec:toric-varieties}

To fix notation we recall the basics of toric geometry. For details, see \cite{T88,F93,CLS11}. An algebraic torus $T$ is a linear algebraic group isomorphic to $(\C^*)^k$ for some integer $k\geq 0$.  A toric variety on $\C$ is an irreducible algebraic variety $V$ having an algebraic torus as a dense open set such that the action of $T$ on itself by translations extends to a regular action of $T$ on $V$. Similarly to \cite{CLS11}, we will not assume that a toric variety is necessarily normal. It is well known that affine toric varieties are in correspondence with affine semigroups $S$, i.e., with finitely generated semigroups that admit an embedding in $\Z^k$ for some integer $k\geq 0$. By convention, all our semigroups are commutative and unital.

Indeed, given an affine semigroup $S$, the corresponding affine toric
variety is given by $\V(S)=\spec\C[S]$, where $\C[S]$ is the semigroup
algebra given by $\C[S]=\bigoplus_{m\in S} \C\cdot\chi^m$. Here,
$\chi^m$ are new symbols and the multiplication rule is defined by
$\chi^0=1$ and $\chi^{m}\cdot\chi^{m'}=\chi^{m+m'}$. On the other
hand, the character lattice $M$ of the torus $T$ is a finitely
generated free abelian group $M\simeq \Z^k$ of rank $k=\dim T$. Let
$V$ be an affine toric variety with acting torus $T$. We define the
semigroup $\S(V)$ of the toric variety $V$ as the semigroup of
characters of $T$ in $M$ that extend to regular functions on $V$.

A toric morphism between toric varieties is a regular map that restricts to a morphism of algebraic groups between the corresponding algebraic tori acting on each toric variety. It is well known that the assignments $\V(\bigcdot)$ and $\S(\bigcdot)$ extend to functors from the category of affine varieties with toric morphisms to the category of affine semigroups and vice versa, respectively. Furthermore, the functors $\V(\bigcdot)$ and $\S(\bigcdot)$ together form a duality between the categories of affine toric varieties with toric morphisms and affine semigroups with homomorphisms of semigroups.

\subsection{Inductive and projective limits}

In this paper we will require several instances of inductive and projective limits of algebraic and geometric objects. We give here a brief account to fix notation, for details, see any reference on category theory such as \cite[Chapter~III]{S71}. All the systems of morphism required in this paper will be indexed by the positive integers with the usual order. Hence we restrict the exposition to this setting.

An inductive system indexed by the positive integers in a category $\mathcal{C}$ is a sequence
$$X_1\stackrel{^{\varphi_{1}}}{\rightarrow} X_2\stackrel{^{\varphi_{2}}}{\rightarrow} X_3\stackrel{^{\varphi_{3}}}{\rightarrow}\dots\,,$$
where $X_i$ are objects in $\mathcal{C}$ and $\varphi_i\colon X_i\rightarrow X_{i+1}$ are morphisms in $\mathcal{C}$. We denote such an inductive system by $(X_i,\varphi_i)$. For every $i,j>0$ with $i\leq j$, we define $\varphi_{ij}\colon X_i\rightarrow X_j$ as $\varphi_{ij}=\varphi_j\circ\varphi_{j-1}\circ\cdots\circ\varphi_i$, where by definition $\varphi_{ii}=\operatorname{id}\colon X_i\rightarrow X_i$. The inductive limit of an inductive system $(X_i,\varphi_i)$ is an object $\ld X_i$ in $\mathcal{C}$ and morphisms $\psi_i\colon X_i\rightarrow \ld X_i$ verifying $\psi_i=\psi_j\circ\varphi_{ij}$ and satisfying the following universal property: if there exist another object $Y$ and morphisms $\psi'_i\colon X_i\rightarrow Y$ verifying $\psi'_i=\psi'_j\circ\varphi_{ij}$, then there exist a unique morphism $u\colon\ld X_i\rightarrow Y$ such that $\psi'_i=u\circ \psi_i$ for all $i>0$.

The notion of projective limit is dual to the notion of inductive
limit and is defined as follows. A projective system indexed by the
positive integers in a category $\mathcal{C}$ is a sequence
$$X_1\stackrel{^{\varphi_{1}}}{\leftarrow} X_2\stackrel{^{\varphi_{2}}}{\leftarrow} X_3\stackrel{^{\varphi_{3}}}{\leftarrow}\dots\,,$$
where $X_i$ are objects in $\mathcal{C}$ and $\varphi_i\colon X_{i+1}\rightarrow X_i$ are morphisms in $\mathcal{C}$. We denote such a projective system by $(X_i,\varphi_i)$. For every $i,j>0$ with $i\leq j$, we define $\varphi_{ij}\colon X_j\rightarrow X_i$ as $\varphi_{ij}=\varphi_i\circ\varphi_{i+1}\circ\cdots\circ\varphi_j$, where by definition $\varphi_{ii}=\operatorname{id}\colon X_i\rightarrow X_i$. The projective limit of a projective system $(X_i,\varphi_i)$ is an objet $\li X_i$ in $\mathcal{C}$ and morphisms $\pi_i\colon \li X_i \rightarrow X_i$ verifying $\pi_i=\varphi_{ij}\circ\pi_{j}$ and satisfying the following universal property: if there exist another object $Y$ and morphisms $\pi'_i\colon Y\rightarrow X_i$ verifying $\pi'_i=\varphi'_{ij}\circ\pi'_{j}$, then there exist a unique morphism $u\colon Y\rightarrow \li X_i$ such that $\pi'_i=\pi_i\circ u$ for all $i>0$.

Both limits may not exist in arbitrary categories but in the categories of our interest (sets, groups, rings, algebras, semigroups, topological space) both limits can be realized by explicit constructions. Indeed, the inductive limit $\ld X_i$ of an inductive system $ (X_i,\varphi_i)$ can be constructed as $\ld X_i= \bigsqcup_{i>0} X_i/\sim$, where $\sim$ is the equivalence relation given by $x_i\sim x_j$, where $x_i\in X_i$ and $x_j\in X_j$, if there exist $k$ verifying $i\leq k$ and $j\leq k$ such that $\varphi_{ik}(x_i)=\varphi_{jk}(x_j)$. The morphisms $\psi\colon X_i\rightarrow \ld X_i$ are induced by the natural injections $ X_i\rightarrow \bigsqcup_{i>0} X_i$. Furthermore, if the morphisms $\varphi_i$ are injective, then we can naturally regard each $X_i$ as a subobject of the inductive limit $\ld X_i$. On the other hand, the projective limit $\li X_i$ of the projective system $(X_i,\varphi_i)$ can be constructed as
$$\li X_i=\Big\{(x_1,x_2,\dots)\in \prod_{i>0} X_i\mid x_i\in
X_i\mbox{ and } \varphi_{ij}(x_j)=x_i\Big\}\,,$$ %
and the morphisms $\pi_i\colon \li X_i\rightarrow X_i$ are induced by the natural projections $\prod_{i>0} X_i\rightarrow X_i$. Furthermore, if the morphisms $\varphi_i$ are surjective, then we can naturally regard each $X_i$ as a quotient of the projective limit $\li X_i$. Finally, in the case where $X_i$ are topological spaces, the topology on the projective limit $\li X_i$ coincides with the subspace topology on $\prod_{i>0} X_i$ with the product topology.

\medskip

\begin{example} \label{split-system} %
  Two particular instances of the above construction will appear very often in this paper. Recall that $\Z^\omega$ is the group of arbitrary sequences of integer numbers. This group is also called the Baer-Specker group. A sequence in $a\in \Z^\omega$ is denoted by $a=(a_1,a_2,\dots)$. Equivalently, $\Z^\omega$ is the projective limit of the system $\Z^1\leftarrow \Z^2\leftarrow\dots$, where the morphisms $\varphi_i\colon \Z^{i+1}\rightarrow \Z^i$ are the projections forgetting the last coordinate. Furthermore, the subgroup of $\Z^\omega$ of eventually zero sequences is denoted by $\Z^\infty$, so $a\in \Z^\infty$ is such that $a_i=0$ except for finitely many positive integers $i$. Equivalently, $\Z^\infty$ is the inductive limit of the system $\Z^1\rightarrow \Z^2\rightarrow\dots$, where the maps are the injections setting the last coordinate to $0$.

  If we take any inductive or projective subsystem of the system defining $\Z^\infty$ or $\Z^\omega$, respectively with the obvious morphisms given by compositions, then the limits are canonically isomorphic to $\Z^\infty$ or $\Z^\omega$, respectively. More generally, a projective or inductive system is called split if every morphism in the system admits a section. It is a straightforward computation to show that for any split projective system $\Z^{n_1}\leftarrow \Z^{n_2}\leftarrow\dots$, with a strictly increasing sequence $n_1<n_2<\dots$ of positive integers, the limit is isomorphic to $\Z^\omega$. Similarly, for any split inductive system $\Z^{n_1}\rightarrow \Z^{n_2}\rightarrow\dots$, with a strictly increasing sequence $n_1<n_2<\dots$ of positive integers, the limit is isomorphic to $\Z^\infty$.
\end{example}

In the sequel we will need the following lemma showing that $\Z^\omega$ and $\Z^\infty$ are mutually dual. Showing that $\operatorname{Hom}(\Z^\infty,\Z)\simeq \Z^\omega$ is a straightforward exercise, but showing $\operatorname{Hom}(\Z^\omega,\Z)\simeq \Z^\infty$ is more involved, see \cite{E50} for the original proof or \cite[Example 3.22]{D19} for a modern proof.

\begin{lemma}\label{specker}
  The groups $\Z^\omega$ and $\Z^\infty$ are mutually dual and this duality is realized by the usual dot product
  \[\langle\ ,\ \rangle\colon \Z^\omega\times\Z^\infty \rightarrow \Z,\qquad (m,p)\mapsto \sum_{i>0}(m_i\cdot p_i)\,. \] %
\end{lemma}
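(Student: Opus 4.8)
The plan is to establish the two duality isomorphisms $\operatorname{Hom}(\Z^\infty,\Z)\simeq \Z^\omega$ and $\operatorname{Hom}(\Z^\omega,\Z)\simeq \Z^\infty$ and to check that under these identifications the canonical evaluation pairings coincide with the dot product $\langle\ ,\ \rangle$ displayed in the statement. Since the paper has already cited \cite{E50} and \cite[Example 3.22]{D19} for the abstract group isomorphism $\operatorname{Hom}(\Z^\omega,\Z)\simeq \Z^\infty$, I would only sketch the two computations and then focus on verifying that these isomorphisms are \emph{realized} by the dot product, which is what the lemma actually asserts.

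First I would treat the easy direction. A homomorphism $\phi\colon\Z^\infty\to\Z$ is determined by its values on the standard basis vectors $e_i$, since every element of $\Z^\infty$ is a finite integer combination of the $e_i$. Setting $m_i=\phi(e_i)$ produces an arbitrary sequence $m=(m_1,m_2,\dots)\in\Z^\omega$, with no finiteness constraint because only finitely many $e_i$ occur in any given element of the domain. Conversely any $m\in\Z^\omega$ defines $\phi(p)=\sum_{i>0} m_i p_i$, a finite sum for each $p\in\Z^\infty$. This assignment $m\mapsto\langle m,\,\bigcdot\,\rangle$ is visibly an isomorphism of groups and is exactly the dot product, giving $\operatorname{Hom}(\Z^\infty,\Z)\simeq\Z^\omega$ realized as claimed.

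Next I would address the harder direction $\operatorname{Hom}(\Z^\omega,\Z)\simeq\Z^\infty$. Given $\psi\colon\Z^\omega\to\Z$, set $p_i=\psi(e_i)$; the content of the Specker result is that $p=(p_1,p_2,\dots)$ lies in $\Z^\infty$, i.e.\ $p_i=0$ for all but finitely many $i$, and moreover that $\psi$ is then given by the finite sum $\psi(m)=\sum_{i>0} m_i p_i=\langle m,p\rangle$. Conversely each $p\in\Z^\infty$ clearly defines such a homomorphism via the dot product, and this sum is finite precisely because $p$ is eventually zero, so the pairing is well defined on all of $\Z^\omega$. I would invoke \cite{E50,D19} for the nontrivial claim that these are \emph{all} the homomorphisms, and then observe that the resulting bijection is again realized by $\langle\ ,\ \rangle$. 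The main obstacle is this last direction: the subtlety is that $\Z^\omega$ is not free and admits elements infinitely divisible by any fixed integer (for instance sequences like $(n!)_{n}$ divided appropriately), which forces any homomorphism to $\Z$ to vanish on such elements and ultimately pins down the eventual vanishing of $(p_i)$; I would lean on the cited proofs rather than reproduce this argument.

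Finally I would package the two directions symmetrically. The two isomorphisms above are adjoint with respect to the single bilinear form $\langle\ ,\ \rangle\colon\Z^\omega\times\Z^\infty\to\Z$, so the form is a perfect pairing: it is nondegenerate on each side (an element pairing to zero against all of the opposite group is zero, seen by testing against the basis vectors $e_i$), and every homomorphism in either $\operatorname{Hom}$ group arises from the pairing. This is exactly the assertion that $\Z^\omega$ and $\Z^\infty$ are mutually dual with the duality realized by the dot product, completing the proof.
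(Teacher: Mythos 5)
Your proposal is correct and takes essentially the same route as the paper, which likewise treats $\operatorname{Hom}(\Z^\infty,\Z)\simeq\Z^\omega$ as a direct exercise with the basis vectors $e_i$ and defers the hard direction $\operatorname{Hom}(\Z^\omega,\Z)\simeq\Z^\infty$ to Specker's theorem via the same references \cite{E50} and \cite[Example 3.22]{D19}. Your additional verification that both identifications are realized by the dot product, making the pairing perfect, is exactly the content the paper leaves implicit.
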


\subsection{General ind-varieties}
\label{sec:ind-def}

In this section we introduce the necessary notions and results
regarding ind-varieties. The definitions are borrowed from
\cite{Kum02}, \cite{Kam03} and \cite{FK}.

Recall that an ind-variety is a set $\mathcal{V}$ together with a
filtration $V_1\hookrightarrow V_2\hookrightarrow\dots$ such that
$\mathcal{V}=\ld V_i:=\bigcup V_i$, each $V_i$ is a finite-dimensional
variety over $\lC$, and the inclusion
$\varphi_i\colon V_i{\hookrightarrow} V_{i+1}$ is a closed
embedding. An ind-variety $\mathcal{V}$ is affine if each $V_i$ is
affine. We also define the ind-topology on an ind-variety
$\mathcal{V}$ as the topology where a set $U\subset \mathcal{V}$ is
open if and only if $U\cap V_i$ is open in $V_i$ for all $i>0$. In
particular, the filtration
$V_1\hookrightarrow V_2\hookrightarrow\dots$ is an inductive system
and the set $\V$ is the inductive limit. The topology defined on $\V$
corresponds to the inductive topology given by this inductive
system. The dimension of $\mathcal{V}$ is $\lim \dim(V_i)$ as $i$
tends to infinity.

A morphism between ind-varieties $\mathcal{V}$ and $\mathcal{V'}$ with
filtrations $V_i$ and $V'_j$ respectively, is a map
$\varphi\colon\mathcal{V}\rightarrow \mathcal{V'}$ satisfying that for
every $i>0$ there exists a positive integer $j>0$ such that
$\varphi\left(V_i\right)\subset V'_{j}$ and
$\varphi|_{V_i}\colon V_i\rightarrow V'_{j}$ is a morphism of
varieties. A morphism $\varphi$ of ind-varieties is an isomorphism if
$\varphi$ is bijective and $\varphi^{-1}$ is a morphism of
ind-varieties. Furthermore, two filtrations
$V_1\hookrightarrow V_2\hookrightarrow\dots$ and
$W_1\hookrightarrow W_2\hookrightarrow\dots$ on the same underlying
set $\mathcal{V}$ are equivalent if the identity map is a isomorphism
of ind-varieties. In analogy with similar Example~\ref{split-system},
if we take any subfiltration of the filtration
$V_1\hookrightarrow V_2\hookrightarrow\dots$, the ind-varieties
obtained by both filtrations are isomorphic. The Cartesian product of two ind-varieties is again an ind-variety with the product filtration. Moreover, an ind-group is an
ind-variety $\mathcal{G}$ endowed with a group structure such that the
inversion and multiplication maps are morphisms of ind-varieties.

Recall that a topological space is irreducible if it is not equal to
the union of two proper closed sets. An irreducible ind-variety
$\mathcal{V}$ does not necessarily admit an equivalent filtration
$V_1\hookrightarrow V_2\hookrightarrow\dots$ by irreducible varieties
as shown in \cite[Remark~4.3]{BF13}, see also
\cite[Example~1.6.5]{FK}. An ind-variety $\mathcal{V}$ is called
curve-connected if for any two points $a,b\in \mathcal{V}$ there
exists an irreducible algebraic curve $C$ and a morphism
$C\rightarrow \mathcal{V}$ whose image contains $a$ and $b$. An
ind-variety $\mathcal{V}$ is curve-connected if and only if there
exists an equivalent filtration
$V_1\hookrightarrow V_2\hookrightarrow\dots$ by irreducible varieties
\cite[Proposition~1.6.3]{FK}.

Recall that a set in a topological space is locally closed if it is
the intersection of an open set and a closed set. Let
$\mathcal{V}=\ld{V_i}$ be an ind-variety. A subset
$A\subset \mathcal{V}$ is called algebraic if it is locally closed and
contained in $V_i$ for some $i>0$, so $A$ has a natural structure of
an algebraic variety.  A morphism
$\alpha\colon\mathcal{V}\rightarrow\mathcal{V}'$ is called an
embedding if the image $\alpha(\mathcal{V})\subset\mathcal{V}'$ is
locally closed and induces an isomorphism of ind-varieties between
$\mathcal{V}$ and $\alpha(\mathcal{V})$. An embedding is called a
closed embedding (resp. an open embedding) if
$\alpha(\mathcal{V})\subset\mathcal{V}'$ is closed
(resp. open). Finally, recall that a constructible set is a finite
union of locally closed subsets.

\begin{example} \label{15}
  \begin{enumerate}
  \item The infinite-dimensional vector space
    \[\C^{\infty}:=\{(a_1,\dots )\mid a_i\in\C\mbox{ and }
      a_i\neq0\mbox{ for finitely many }i\}\] %
    has a canonical structure of ind-variety given by the filtration $\C\stackrel{\varphi_1}{\hookrightarrow}\C^2\stackrel{\varphi_2}{\hookrightarrow}\C^3\stackrel{\varphi_3}{\hookrightarrow}\dots$ where $\varphi_n(a_1,\dots ,a_i)=(a_1,\dots ,a_i,0)$, for all $i>0$. This ind-variety is called the infinite-di\-men\-sion\-al affine space. Remark that we can change the complex number $0$ in the filtration definition of $\C^\infty$ and in $(i+1)$-th coordinate of $\varphi_i$ by any other number. The ind-variety obtained this way is easily seen to be isomorphic to $\C^\infty$. For instance, we denote by $\C^\infty_1$ the ind-variety isomorphic to the infinite-dimensional affine space given by $\C_1^{\infty}:=\{(a_1,\dots )\mid a_i\in\C\mbox{ and } a_i\neq 1\mbox{ for finitely many }i\}$.

  \item The set
    \[\left(\lC^*\right)^{\infty}=\left\{\left(a_1,a_2,\dots\right)\mid a_i\in \lC^* \ \text{and}\ a_i\neq1\mbox{ for finitely many }i\right\}\] %
    has a canonical structure of ind-variety given by the filtration $\lC^*\stackrel{\varphi_1} {\hookrightarrow}\left(\lC^*\right)^{2}\stackrel{\varphi_2} {\hookrightarrow}\left(\lC^*\right)^{3}\stackrel{\varphi_3} {\hookrightarrow}\dots$, where $\varphi_i\left(a_1,\dots ,a_i\right)=\left(a_1,\dots ,a_i,1\right)$ for all $i>0$. This ind-variety is an open set in the infinite-dimensional affine space. This follows straightforward from the isomorphism $\C^\infty\simeq \C^\infty_1$ above. Remark that $\left(\lC^*\right)^{\infty}$ has a natural structure of ind-group given by component-wise multiplication.
  \end{enumerate}
\end{example}

A commutative topological $\C$-algebra $\mathcal{A}$ is pro-affine if it is Hausdorff, complete and admits a base $\left\{I_i\right\}_{i>0}$ of open neighborhoods of $0$, where $I_i\subset \mathcal{A}$ is an ideal for all $i>0$. Furthermore, we can assume that $I_i$ form a descending filtration $I_1\supset I_2\supset\dots$ of ideals of $\mathcal{A}$. Recall that Hausdorff property is equivalent to $\bigcap I_i=\left\{0\right\}$ and completeness is equivalent to $\mathcal{A}=\li\mathcal{A}_i$ where the algebra $\mathcal{A}_i:=\mathcal{A}/ I_i$ is taken with the discrete topology, see \cite[Section~9.2]{N68} for details. A pro-affine algebra $\mathcal{A}$ is algebraic if $\mathcal{A}_i$ is finitely generated over $\C$ for all $i>0$. Every finitely generated algebra over $\C$ is a pro-affine algebraic with $I_i=\left\{0\right\}$ for all $i>0$. In the sequel all pro-affine algebras are algebraic, so we will drop algebraic from the notation.

For an ind-variety $\mathcal{V}$ with filtration $V_1\stackrel{\varphi_1}{\hookrightarrow} V_2\stackrel{\varphi_2}{\hookrightarrow},\dots$ the ring of regular functions $\lC[\mathcal{V}]$ is define as $\li\C[V_i]$ of $\lC[V_1]\stackrel{\varphi^{*}_1}{\leftarrow} \lC[V_2]\stackrel{\varphi^{*}_2}{\leftarrow}\dots$ where each $\lC[V_i]$ is taken with the discrete topology and $\li{\lC[V_i]}$ has the projective limit topology i.e.,
$$\lC[\mathcal{V}]=\li{\lC[V_i]}=\Big\{\left(f_1, f_2,\dots\right)\mid f_i\in
\C[V_i]\mbox{ and } \varphi^{*}_i\left(f_{i+1}\right)=f_i\Big\}\subset\prod_{i>0}\lC[V_i]\,,$$
with subspace topology. The projective limit comes equipped with natural projections $\pi_i\colon\C[\mathcal{V}]\rightarrow \C[V_i]$. 

Let  $\alpha\colon\mathcal{V}\rightarrow\V'$ be a morphism of ind-varieties, then for every $i>0$ there exists $j>0$ such that $\alpha$ induces an homomorphism $\CC[V'_j]\rightarrow \CC[V_i]$ and so $\alpha$ induces a continuous pro-affine algebras homomorphism $\alpha^*\colon\C[\V']\rightarrow\C[\mathcal{V}]$. Conversely, every continuous homomorphism $\beta\colon\CC[V']\rightarrow \CC[V]$ of pro-affine algebras induces for every $i>0$ a homomorphism $\CC[V'_j]\rightarrow \CC[V_i]$ for some $j>0$ and so it induces a morphism $V_i\rightarrow V_j$ which in turns gives a morphism $\beta^*\colon\mathcal{V}\rightarrow\mathcal{V}'$  \cite{Kum02,Kam03}. This yields an equivalence of categories between pro-affine algebras and affine ind-varieties.

\section{Toric ind-variety }
\label{sec:toric-ind}

An algebraic torus $T$ is an algebraic group isomorphic to $\left(\lC^*\right)^{i}$ for some $i\geq 0$. An ind-torus $\T$ is an ind-group isomorphic to either an algebraic torus or  $\left(\lC^*\right)^{\infty}$. A regular action of an ind-torus $\T$ on an ind-variety $\mathcal{V}$ is a group action $\alpha\colon\T\times \mathcal{V}\rightarrow \mathcal{V}$ by automorphisms of $\mathcal{V}$ such that $\alpha$ is also a morphism of ind-varieties.

\begin{definition} \label{def:ind-toric-variety} %
  A toric ind-variety is a curve-connected ind-variety $\mathcal{V}$
  having an ind-torus $\T$ as an open subset such that the action of
  $\T$ on itself by translations extends to a regular action of $\T$
  on $\mathcal{V}$.
\end{definition}

If $\mathcal{V}$ is finite dimensional, then this definition coincides
with the usual notion of toric variety since curve-connectedness is
equivalent to irreducibility in the finite-dimensional case, see for
instance \cite[Definition~1.1.3]{CLS11}. Asking for an ind-toric
variety $\mathcal{V}$ to be curve-connected is equivalent to ask that
$\mathcal{V}$ can be presented as the inductive limit of irreducible
varieties \cite[Proposition~1.6.3]{FK}.  Remark that similarly to
\cite{CLS11} and unlike other references \cite{T88,F93}, we do not
require toric varieties to be normal.

\begin{example} \label{different-torus} %
  Recall that $\Z^\infty$ is defined as the inductive limit of the
  inductive system $\Z\rightarrow \Z^2\rightarrow\dots$ where the maps
  are the injections setting the last coordinate to $0$. Taking tensor
  product of this system with $\C^*$ we obtain the inductive system
  defining $(\C^*)^\infty$. In analogy with the finite-dimensional
  case, we denote this by by
  $(\C^*)^\infty=\Z^\infty\otimes_{\Z}\C^*$.  Now, it follows directly
  from Example~\ref{split-system} that for every sequence
  $\lC^*\stackrel{\varphi_1}
  {\hookrightarrow}\left(\lC^*\right)^{2}\stackrel{\varphi_2}
  {\hookrightarrow}\left(\lC^*\right)^{3}\stackrel{\varphi_3}
  {\hookrightarrow}\dots$ with $\varphi_i$ injective homomorphisms of
  algebraic groups, the corresponding ind-variety is an ind-group
  isomorphic to $(\mathbb{C}^{*})^\infty$.
\end{example}

In the next theorem we show that for every toric ind-variety, we can find an equivalent filtration composed of toric varieties and toric morphisms.

\begin{theorem}\label{toric-filtration}
  Let $\mathcal{V}=\ld V_i$ be an ind-variety endowed with a regular
  action of the ind-torus $\T$. Then $\mathcal{V}$ is an affine toric
  ind-variety with respect to $\T$ if and only if
  $\mathcal{V}\simeq\ld{W_j}$ where $W_j$ are affine toric varieties with
  acting torus $T_j$, the closed embedding
  $\varphi_j\colon W_j\hookrightarrow W_{j+1}$ are toric morphisms and
  the ind-torus $\T$ is the inductive limit $\ld T_j$.
\end{theorem}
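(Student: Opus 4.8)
The plan is to prove the two implications separately; the nontrivial content lies in reconstructing a toric filtration from an abstractly given toric ind-variety.

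For the implication $(\Leftarrow)$, suppose $\mathcal{V}\simeq\ld{W_j}$ with the stated data. Each $W_j$ is an irreducible affine toric variety, so $\mathcal{V}$ is an affine ind-variety and, being an inductive limit of irreducible varieties, is curve-connected by \cite[Proposition~1.6.3]{FK}. The group $\T=\ld T_j$ is an ind-torus: since each $\varphi_j$ is a toric morphism and a closed embedding, it restricts to an injective homomorphism of algebraic tori $T_j\hookrightarrow T_{j+1}$, so by Example~\ref{different-torus} the limit is an ind-torus. It is open in $\mathcal{V}$ because $\T\cap W_j=\bigcup_{j'\ge j}(T_{j'}\cap W_j)$ is a union of subsets that are open in $W_j$ (each $T_{j'}$ being open in $W_{j'}\supseteq W_j$). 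Finally, the compatible translation actions $T_j\times W_j\to W_j$ assemble, through the product filtration, into a regular action $\T\times\mathcal{V}\to\mathcal{V}$ extending translation on $\T$; hence $\mathcal{V}$ is an affine toric ind-variety.

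For the implication $(\Rightarrow)$, assume $\mathcal{V}$ is an affine toric ind-variety for $\T=\ld T_j$. Curve-connectedness forces $\mathcal{V}$ to be irreducible as a topological space, so the open subset $\T$ is dense. Because the action extends translation, the orbit $T_j\cdot e$ equals $T_j$ and has trivial stabilizer. I would set $W_j:=\overline{T_j}$, the closure of the subtorus $T_j$ in $\mathcal{V}$. First I would note this closure is finite-dimensional and affine: $T_j$ lies in some affine $V_{i(j)}$, and a closed subset of $V_{i(j)}$ is closed in $\mathcal{V}$, so the closure is computed inside $V_{i(j)}$ and is a closed, hence affine, subvariety. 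It is irreducible and $T_j$-invariant, the latter since continuity of the action gives $T_j\cdot\overline{T_j}\subseteq\overline{T_j}$, and $T_j=T_j\cdot e$ is its dense orbit, which is open in $W_j$ because orbits of algebraic group actions are locally closed. Thus $W_j$ is an affine toric variety with acting torus exactly $T_j$, and from $T_j\subseteq T_{j+1}$ we obtain a closed embedding $W_j\hookrightarrow W_{j+1}$ restricting to the torus inclusion $T_j\hookrightarrow T_{j+1}$, i.e.\ a toric morphism.

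The crux of the argument, and the step I expect to be the main obstacle, is the exhaustion $\bigcup_j W_j=\mathcal{V}$: a priori the torus-closures might accumulate only along $\T$ rather than reaching the whole ind-variety. I would handle this by showing that $W_\infty:=\bigcup_j W_j$ is closed in $\mathcal{V}$: for each $i$ the sets $W_j\cap V_i$ form an ascending chain of closed subsets of the Noetherian space $V_i$, which therefore stabilizes, so $W_\infty\cap V_i$ is closed in $V_i$ and $W_\infty$ is closed in $\mathcal{V}$. Since $W_\infty\supseteq\T$ and $\T$ is dense, this forces $W_\infty=\mathcal{V}$. Applying the same stabilization to the chain $V_i\cap W_j$ shows each $V_i$ is contained in a single $W_{k}$, while each $W_j$ sits inside some $V_{i(j)}$; these mutual closed inclusions prove that the filtrations $(V_i)$ and $(W_j)$ are equivalent, giving $\mathcal{V}\simeq\ld{W_j}$ with $\T=\ld T_j$. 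It is precisely the interplay of density of $\T$ (from irreducibility) with the ascending chain condition on each finite-dimensional $V_i$ that makes the orbit-closures exhaust $\mathcal{V}$.
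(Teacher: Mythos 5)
Your ``if'' direction and your construction $W_j=\overline{T_j}$, together with the verification that each $W_j$ is an affine toric variety with toric closed embeddings $W_j\hookrightarrow W_{j+1}$, follow the same route as the paper. However, the step you yourself single out as the crux --- the exhaustion $\bigcup_j W_j=\mathcal{V}$ and the equivalence of the filtrations $(V_i)$ and $(W_j)$ --- rests on a false principle. You claim that ``the sets $W_j\cap V_i$ form an ascending chain of closed subsets of the Noetherian space $V_i$, which therefore stabilizes.'' Noetherian topological spaces satisfy the \emph{descending} chain condition on closed subsets (equivalently, the ascending chain condition on \emph{open} subsets); ascending chains of closed subsets need not stabilize. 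For instance, $\{p_1\}\subset\{p_1,p_2\}\subset\{p_1,p_2,p_3\}\subset\cdots$ is a strictly increasing chain of closed subsets of the Noetherian space $\AF^1$. Consequently neither the closedness of $W_\infty$ nor the containment $V_i\subset W_k$ follows from your argument, and the ``only if'' direction collapses at exactly the point you identified as the main obstacle.

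The input that is genuinely needed here is not a chain condition but the uncountability of $\C$: an irreducible complex variety cannot be covered by countably many proper closed subsets. This is what the paper uses, via \cite[Lemma~1.3.1]{FK}: after arranging $V_i\cap\T\neq\emptyset$, the set $X=V_i\cap\T$ is a nonempty open, hence dense, algebraic subset of the irreducible variety $V_i$; writing $X=\bigcup_{j>0}(X\cap W_j)$ as a countable union of closed subsets of $X$ gives $X\subset W_k$ for a single $k$, and then $V_i=\overline{X}\subset W_k$ since $W_k$ is closed. Note that over a countable algebraically closed field an irreducible variety \emph{is} a countable increasing union of proper closed subsets, so no purely formal argument of the kind you propose can substitute for this uncountability input. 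A second, smaller gap of the same nature occurs earlier in your proof: you assert without justification that $T_j$ lies in some $V_{i(j)}$. This is not automatic either; the paper derives it by showing that the inclusion $T_{j}\hookrightarrow\mathcal{V}$ is continuous and applying \cite[Lemma~1.1.5]{FK}, again an uncountability-based statement. (Alternatively, it follows from the regularity of the action $\alpha\colon\T\times\mathcal{V}\to\mathcal{V}$: choosing $j_0$ with $e\in V_{j_0}$, for $j\geq j_0$ one has $T_j=T_j\cdot e\subset\alpha(T_j\times V_j)$, which is contained in some $V_{i(j)}$ by the definition of a morphism of ind-varieties.)
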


\begin{proof}
  The finite dimensional case is trivial since we can take
  $W_j=\mathcal{V}$ and $T_j=\T$, for all $j>0$. Hence, we only deal
  with the case where $\T=(\C^*)^{\infty}$. To prove the ``only if''
  part we may assume that each $V_i$ is irreducible since
  $\mathcal{V}$ is curve-connected. Let $W_j$ be the closure of
  $\left(\lC^*\right)^j$ in $\mathcal{V}$. The acting torus in $W_j$
  is $T_j=\left(\lC^*\right)^j$ and so it follows that $\T=\ld T_j$.
  Fix an integer $j>0$. Let $A$ be a closed set in $\mathcal{V}$. Then
  $A\cap (\CT)^\infty$ is closed in $(\CT)^\infty$ so
  $A\cap (\CT)^{j+1}$ is closed in $(\CT)^{j+1}$. Hence, the inclusion
  $(\CT)^{j+1}\hookrightarrow\mathcal{V}$ is continuous and so by
  \cite[Lemma~1.1.5]{FK}, there exist $i>0$ such that
  $W_{j+1}\subset V_i$. Furthermore, the inclusion
  $(\CT)^{j}\hookrightarrow (\CT)^{j+1}$ induces an inclusion
  $\varphi_j\colon W_j\hookrightarrow W_{j+1}$. Since $V_i$ is closed
  in $\mathcal{V}$ we have that $W_j$ and $W_{j+1}$ are closed in
  $V_i$ and so $\varphi_i$ is a closed embedding.

  We claim that the varieties $W_j$ are toric with respect to the
  algebraic tori $T_j=(\CT)^{j}$ and the morphisms
  $\varphi_j\colon W_j\hookrightarrow W_{j+1}$ are toric. Indeed,
  since $(\CT)^{j}$ is irreducible, $W_j$ is also irreducible, for all
  $j>0$. Furthermore, the $T_j$-action on $T_j$ by translations
  extends to a $T_j$-action in $W_j$ since for every $t\in T_j$, we
  have $t.W_j$ is contained in the closure of
  $t.(\C^*)^j= \left(\C^*\right)^j$ and so $W_j$ is stabilized by
  $T_j$. Finally, by \cite[Proposition 1.11]{B10}, the $T_j$-orbit
  $(\lC^*)^j$ is locally closed in $W_j$ and so we conclude that
  $(\lC^*)^j$ is an open set in $W_j$. Hence $W_j$ is a toric
  variety. Furthermore, the morphism
  $\varphi_j\colon W_j\hookrightarrow W_{j+1}$ is toric since its
  restriction to the acting torus is a group homomorphism by
  definition.

  Finally, we prove that $\mathcal{V}\simeq\ld{W_j}$ by proving that   the filtrations given by $V_i$ and $W_j$, respectively are   equivalent. We already proved above that for every $j>0$ there   exists $i>0$ such that $W_j\subset V_i$ is a closed embedding. To prove the other direction, we need to prove that for every $V_i$ there exists $W_k$ with $V_i\subset W_k$. Without loss of generality, we may and will assume that $W_1\subset V_i$. Observe that the set
  $X=V_i\cap \left(\C^*\right)^\infty$ is a non-empty algebraic subset of
  $\mathcal{V}$. Furthermore, since
  $\left(\C^*\right)^\infty\subset\bigcup_{j>0}W_j$ and
  $X\subset \left(\C^*\right)^\infty$ we have
  $X=\bigcup_{j>0}X\cap W_j$.  By \cite[Lemma~1.3.1]{FK}, there exists a positive integer $k$ such that $X=X\cap W_k$ and so
  $X\subset W_k$. Moreover, the closure of $X$ in $\mathcal{V}$ is $V_i$ since $V_i$ is irreducible by our assumption above. Since $W_k$ is closed, we conclude that $V_i\subset W_k$ is a closed embedding. This concludes the proof of the ``only if'' part of the theorem.

  We now prove the ``if'' direction of the theorem. The ind-variety
  $\mathcal{V}\simeq\ld W_j$ is curve-connected since each $W_j$ is
  irreducible.  Furthermore, by Example~\ref{different-torus} the
  limit $\T=\ld T_j$ is an ind-torus. Moreover, $\T$ is an open set in
  $\ld W_j$ by the definition of the ind-topology. Moreover, the
  action of $\T$ on itself by multiplication extends to $\ld W_j$
  since the same holds in all the strata for $T_j$ acting on
  $W_j$. This concludes the proof.
\end{proof}

\begin{remark}
  The above theorem can be generalized to the case of ind-varieties endowed with an action of a nested ind-group, i.e., an ind-group admitting an equivalent filtration by algebraic groups \cite[Section~9.4]{FK}. We restrict to the case of the ind-torus for simplicity.  
\end{remark}

Let $\mathcal{V}=\ld V_i$ be a toric ind-variety. We say that $V_1\hookrightarrow V_2\hookrightarrow\dots$ is a toric filtration if for every $i>0$ the variety $V_i$ is toric with acting torus $T_i$, the closed embedding $\varphi_i\colon V_i\hookrightarrow V_{i+1}$ is a toric morphism and the acting ind-torus $\T$ is the inductive limit $\ld T_i$. Theorem~\ref{toric-filtration} above ensures the every toric ind-variety admits a toric filtration.

We define toric morphisms in direct analogy with the case of classical toric varieties.

\begin{definition} \label{def:ind-toric-morphism}
  Let $\T_{\mathcal{V}}$ and $\T_{\mathcal{V'}}$ be ind-tori acting on toric ind-varieties $\mathcal{V}=\ld V_i$ and $\mathcal{V'}=\ld V'_j$, respectively. A morphism $\alpha\colon\mathcal{V}\rightarrow \mathcal{V}'$ of ind-varieties is toric if the image of $\T_{\mathcal{V}}$ by $\alpha$ is contained in $\T_{\mathcal{V}'}$ and $\alpha|_{\T_\mathcal{V}}\colon\T_\mathcal{V}\rightarrow \T_{\mathcal{V}'}$ is a morphism of ind-group.
\end{definition}

\begin{proposition} \label{toric-morphism-filtration} %
  Let $\alpha\colon\V\rightarrow \V'$ be a morphism of toric ind-varieties $\mathcal{V}$ and $\V'$. Then $\alpha$ is a toric morphism if and only if for every toric filtrations $V_1\hookrightarrow V_2\hookrightarrow\dots$ and $V'_1\hookrightarrow V'_2\hookrightarrow\dots$ of $\V$ and $\V'$, respectively, and every $i>0$, there exists an integer $j>0$ such that $\alpha|_{V_i}\colon V_i\rightarrow V'_j$ is a toric morphism.
\end{proposition}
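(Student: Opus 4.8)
The plan is to prove the two implications separately, with the forward direction being essentially a restriction argument and the reverse direction a limit/patching argument. Throughout, recall from Definition \ref{def:ind-toric-morphism} that $\alpha$ is toric means $\alpha(\T_\V)\subset \T_{\V'}$ and that $\alpha|_{\T_\V}\colon \T_\V\to\T_{\V'}$ is a morphism of ind-groups. By Theorem \ref{toric-filtration}, for a toric filtration $V_1\hookrightarrow V_2\hookrightarrow\cdots$ we have $\T_\V=\ld T_i$ with each $T_i=(\C^*)^{n_i}$ the acting torus of the toric variety $V_i$, and similarly $\T_{\V'}=\ld T'_j$.

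For the ``only if'' direction, assume $\alpha$ is toric. Fix a toric filtration and an index $i>0$. Since $\alpha$ is a morphism of ind-varieties, there is $j>0$ with $\alpha(V_i)\subset V'_j$ and $\alpha|_{V_i}\colon V_i\to V'_j$ a morphism of algebraic varieties. It remains to check this restriction is a \emph{toric} morphism, i.e.\ that it carries the acting torus $T_i\subset V_i$ into the acting torus $T'_j\subset V'_j$ via a group homomorphism. First I would note that $T_i=(\C^*)^{n_i}$ sits inside $\T_\V=\ld T_k$ as the image of the natural inclusion, and likewise for $T'_j$. Since $\alpha|_{\T_\V}$ is a morphism of ind-groups sending $\T_\V$ into $\T_{\V'}$, its restriction to the finite-dimensional piece $T_i$ lands in some $T'_{j'}$ and is an algebraic group homomorphism there; after enlarging $j$ (using the inclusions $V'_j\hookrightarrow V'_{j+1}$ and the compatibility of the tori $T'_j\hookrightarrow T'_{j+1}$) we may assume $j'=j$. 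Thus $\alpha|_{V_i}$ restricts on $T_i$ to a group homomorphism into $T'_j$, which is precisely the definition of a toric morphism between the toric varieties $V_i$ and $V'_j$.

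For the ``if'' direction, assume that for some pair of toric filtrations and every $i$ there is $j$ with $\alpha|_{V_i}\colon V_i\to V'_j$ toric. I would first verify $\alpha(\T_\V)\subset\T_{\V'}$: since each $\alpha|_{V_i}$ is toric it sends $T_i$ into $T'_j\subset\T_{\V'}$, and as $\T_\V=\bigcup_i T_i$ this gives $\alpha(\T_\V)\subset\T_{\V'}$. Next I would check $\alpha|_{\T_\V}$ is a morphism of ind-groups: it is a morphism of ind-varieties because for each $i$ the restriction $\alpha|_{T_i}\colon T_i\to T'_j$ is a morphism of algebraic varieties, and it respects the group law because each $\alpha|_{T_i}$ is an algebraic group homomorphism by the toric hypothesis, so compatibility of the group multiplications is inherited in the limit. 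Hence $\alpha$ is toric.

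The main subtlety, and the step I would treat most carefully, is the passage between the intrinsic condition in Definition \ref{def:ind-toric-morphism} (phrased only in terms of the ind-tori $\T_\V,\T_{\V'}$) and the filtration-level condition (phrased in terms of the chosen toric filtrations). The statement asserts the equivalence holds for \emph{every} choice of toric filtrations, so implicitly one must know the torus filtrations $\ld T_i$ and $\ld T'_j$ recovered from any two toric filtrations are compatible via the equivalence of filtrations; this is exactly what Theorem \ref{toric-filtration} guarantees, since the acting ind-torus $\T$ is intrinsic and equals $\ld T_i$ for any toric filtration. The one genuine bookkeeping obstacle is the index matching in the forward direction: the index $j$ produced by $\alpha$ being a morphism of ind-varieties need not a priori coincide with the index at which $\alpha|_{\T_\V}$ lands, so one must enlarge $j$ consistently using the closed embeddings in both filtrations and the fact (from Theorem \ref{toric-filtration}) that $T'_j\hookrightarrow T'_{j+1}$ are compatible group homomorphisms. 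I expect this reconciliation of indices to be the only point requiring care; everything else reduces to the classical duality between affine toric varieties and affine semigroups recalled in Section \ref{sec:toric-varieties}.
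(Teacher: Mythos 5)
Your proof is correct and follows essentially the same route as the paper: a restriction argument along the filtrations for the forward direction, and patching the group homomorphisms $\alpha|_{T_i}$ in the limit for the converse. The only difference is in bookkeeping: where you reconcile indices by enlarging $j$ (using the compatible inclusions $T'_j\hookrightarrow T'_{j+1}$), the paper instead keeps the index $j$ fixed by invoking the identity $H_j=V'_j\cap\T_{\V'}$ for a toric filtration; both devices work, and yours has the small advantage of not needing to justify that identity.
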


\begin{proof}
  To prove the ``only if'' direction of the proposition, we assume that $\alpha$ is toric and by Theorem~\ref{toric-filtration} we let $V_1\hookrightarrow V_2\hookrightarrow\dots$ and $V'_1\hookrightarrow V'_2\hookrightarrow\dots$ be toric filtrations of $\V$ and $\V'$, respectively. By definition of morphism of ind-varieties, for every $i>0$ there exists $j>0$ such that $\alpha$ restricts to a morphism of varieties $\alpha|_{V_i}\colon V_i\rightarrow V'_j$. Let $\T_{\V}=\ld T_i$ and $\T_{\V'}=\ld H_j$ be the acting tori with the filtration coming from the toric filtration of $\V$ and $\V'$, respectively. By the definition of toric morphism, we have $\alpha(T_i)\subset \T_{\V'}$ and so $\alpha(T_i)\subset H_j=V'_j\cap \T_{\V'}$. Since $\alpha\colon\T_{\V}\rightarrow \T_{\V'}$ is a group homomorphism, the same holds for $\alpha|_{T_i}\colon T_i\rightarrow H_j$. This proves this direction of the proposition.

  To prove the ``if'' part, we let $V_1\hookrightarrow V_2\hookrightarrow\dots$ and $V'_1\hookrightarrow V'_2\hookrightarrow\dots$ be toric filtrations of $\V$ and $\V'$, respectively. We further assume that for every $i>0$, there exist an integer $j>0$ such that $\alpha|_{V_i}\colon V_i\rightarrow V'_j$ is a toric morphism. Furthermore, replacing the toric filtration of $\V'$ by a renumbered subfiltration we may and will assume $\alpha|_{V_i}\colon V_i\rightarrow V'_i$ is a toric morphism. It follows that $\alpha(T_i)\subset H_i$, where $\T_{\V}=\ld T_i$ and $\T_{\V'}=\ld H_j$ be the acting tori with the filtration coming from the toric filtration of $\V$ and $\V'$, respectively. Hence, we conclude $\alpha(\T_{\V})\subset \T_{\V'}$. Similarly, the fact that $\alpha|_{T_i}\colon T_i\rightarrow H_i$ is a homomorphism of groups implies that $\alpha|_{\T_{\V}}\colon \T_{\V}\rightarrow \T_{\V'}$ is a homomorphism of ind-groups proving the proposition.
\end{proof}

\begin{remark}
  It is straightforward to show that a toric morphism $\alpha\colon \V\rightarrow \V'$ of toric ind-varieties is equivariant, i.e., $\alpha(t.x)=\alpha(t).\alpha(x)$, for all $t\in \T_{\V}$ and all $x\in \mathcal{V}$.
\end{remark}

A character of an ind-torus $\T$ is a morphism $\chi\colon\T\rightarrow \C^*$ of ind-varieties that is also a group homomorphism. The set of characters of $\T$ forms a group denoted by $\mathcal{M}$. If $\dim\T< \infty$ it is well known that $\mathcal{M}$ is a finitely generated free abelian group of rank $\dim\T$. Similarly, a one-parameter subgroup of $\T$ is a morphism $\lambda\colon\C^*\rightarrow \T$ of ind-varieties that is also a group homomorphism. The set of one-parameter subgroups of $\T$ forms a group denoted by $\mathcal{N}$. If $\dim\T< \infty$ it is well known that $\mathcal{N}$ is also a finitely generated free abelian group of rank $\dim\T$. Furthermore, if $\dim\T< \infty$, then the groups $\mathcal{M}$ and $\mathcal{N}$ are dual with duality $\M\times\N\rightarrow \Z$ given by $\langle\chi,\lambda\rangle=k$ where $k$ is the unique integer such that $\chi\circ\lambda\colon \C\rightarrow \C$ maps $t$ to $t^k$.

We now compute the groups of characters and one-parameter subgroups of the ind-torus and prove the analogous duality result. Let $\T$ be the infinite-dimensional ind-torus with toric filtration $T_1\hookrightarrow T_2\hookrightarrow\dots$. Letting $M_i$ and $N_i$ be the character lattice and the one-parameter subgroup lattice of $T_i$, respectively, the filtration induces naturally a projective system $M_1\leftarrow M_2\leftarrow\dots$ and an inductive system $N_1\rightarrow N_2\rightarrow\dots$.

\begin{proposition} \label{ind-torus-characters} Let $\T$ be the infinite-dimensional ind-torus with toric filtration $T_1\hookrightarrow T_2\hookrightarrow\dots$. Then
  \begin{enumerate}
  \item The group of characters $\mathcal{M}$ of $\T$ is $\li M_i$ and is isomorphic to $\Z^\omega$.
  \item The group of one-parameter subgroups $\mathcal{N}$ of $\T$ is $\ld N_i$ and is isomorphic to $\Z^\infty$.
  \item The groups $\mathcal{M}$ and $\mathcal{N}$ are natural dual to each other and the duality is realized by the pairing $\la\ ,\ \ra\colon \mathcal{M}\times \mathcal{N}\rightarrow \Z$ given by $\langle\chi,\lambda\rangle=k$, where $\lambda\circ \chi\colon\C^*\rightarrow \C^*$ maps $t\mapsto t^k$ making $\mathcal{M}$ and $\mathcal{N}$ dual groups.
  \end{enumerate}
\end{proposition}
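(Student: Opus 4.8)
The plan is to reinterpret $\M$ and $\N$ as the limits $\li M_i$ and $\ld N_i$, to identify these limits with $\Z^\omega$ and $\Z^\infty$ by exhibiting the underlying systems as split, and finally to read off the duality in (3) from Lemma~\ref{specker}, with the only genuinely nontrivial input being the hard direction of that lemma.

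First I would prove (1) together with the description $\M\simeq\li M_i$. A character $\chi\colon\T\to\CT$ is in particular a morphism of ind-varieties into the trivially filtered finite-dimensional target $\CT$, so for every $i>0$ the restriction $\chi|_{T_i}\colon T_i\to\CT$ is a morphism of varieties and a group homomorphism, hence an element $m_i\in M_i$. Compatibility $m_i=\rho_i(m_{i+1})$ with the restriction maps $\rho_i\colon M_{i+1}\to M_i$ is immediate from $\chi|_{T_i}=\chi|_{T_{i+1}}\circ\varphi_i$, so $\chi\mapsto(m_i)_i$ defines a homomorphism $\M\to\li M_i$; conversely a compatible family $(m_i)$ glues to a character of $\T=\bigcup T_i$ because $\T$ is the inductive limit, giving the inverse. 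Interchanging source and target yields $\N\simeq\ld N_i$: a one-parameter subgroup $\lambda\colon\CT\to\T$ has finite-dimensional source, so by the definition of morphism of ind-varieties it factors through some $T_j$, giving $\lambda\in N_j$, and the inclusions $N_j\hookrightarrow N_{j+1}$ induced by $T_j\hookrightarrow T_{j+1}$ identify $\N$ with $\ld N_i$.

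Next I would identify the limits. The key observation is that each inclusion $T_i\hookrightarrow T_{i+1}$ in the toric filtration is an injective homomorphism of algebraic tori. On cocharacter lattices this corresponds to a map $N_i\to N_{i+1}$ whose kernel of the corresponding torus homomorphism is a finite group of roots of unity determined by the torsion of its cokernel; hence injectivity forces the cokernel to be torsion-free, and an injection of finitely generated free abelian groups with torsion-free cokernel splits. Thus the inductive system $N_1\to N_2\to\cdots$ is split, and since $\rank N_i=i$ is strictly increasing, Example~\ref{split-system} gives $\N\simeq\ld N_i\simeq\Z^\infty$, proving (2). Transposing, the projective system $M_1\leftarrow M_2\leftarrow\cdots$ consists of split surjections, so Example~\ref{split-system} gives $\M\simeq\li M_i\simeq\Z^\omega$, proving (1).

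Finally, for (3) I would pass the finite-dimensional perfect pairings $\la\ ,\ \ra_i\colon M_i\times N_i\to\Z$ to the limit. Because the restriction maps $\rho_i$ and the inclusions $\iota_i\colon N_i\to N_{i+1}$ are adjoint, i.e. $\la\rho_i(m),\lambda\ra_i=\la m,\iota_i(\lambda)\ra_{i+1}$, the pairing $\la\ ,\ \ra\colon\M\times\N\to\Z$ is well defined and coincides with the description through $t\mapsto t^k$. One direction of the duality is then formal: since $\homo(\bigcdot,\Z)$ carries inductive limits to projective limits and $M_i\simeq\homo(N_i,\Z)$, we obtain $\homo(\N,\Z)=\homo(\ld N_i,\Z)=\li\homo(N_i,\Z)=\li M_i=\M$. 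The reverse identification $\N\simeq\homo(\M,\Z)$ is the main obstacle, since $\homo(\bigcdot,\Z)$ does not commute with projective limits; here I would use the isomorphism $\M\simeq\Z^\omega$ just established and the nontrivial direction of Lemma~\ref{specker}, namely $\homo(\Z^\omega,\Z)\simeq\Z^\infty$, checking that under the identifications of (1) and (2) the limit pairing becomes the dot product of Lemma~\ref{specker}. This simultaneously yields both duality statements and the perfectness of $\la\ ,\ \ra$.
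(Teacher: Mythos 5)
Your proposal is correct and follows essentially the same route as the paper: identify $\M\simeq\li M_i$ and $\N\simeq\ld N_i$ via restriction of characters and factorization of one-parameter subgroups, invoke Example~\ref{split-system} for the isomorphisms with $\Z^\omega$ and $\Z^\infty$, and reduce (3) to Lemma~\ref{specker} by matching the pairing with the dot product. The only difference is that you spell out details the paper leaves implicit (the splitness of the lattice systems coming from injectivity of the torus embeddings, and the $\homo(\bigcdot,\Z)$ limit formalities), which is a welcome but not essentially different elaboration.
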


\begin{proof}
  To prove (1), we let $\chi\colon\T\rightarrow \C^*$ be a character of $\T$. By the definition of morphism of ind-varieties, we have that $\chi|_{T_i}\colon T_i\rightarrow \C^*$ is a character of $T_i$ for all $i>0$. This produces homomorphisms $\pi_i\colon\M\rightarrow M_i$ satisfying $\pi_i=\varphi^*_i\circ\pi_{i+1}$, where $\varphi^*_i\colon M_{i+1}\rightarrow M_i$ is the map induced by $\varphi\colon T_i\rightarrow T_{i+1}$. By the universal property of the projective limit we have a homomorphism $\M\rightarrow \li M_i$. On the other hand, we define the inverse homomorphism $\li M_i\rightarrow \M$ in the following way. Let $(\chi_1,\chi_2,\dots)$ be an element in the projective limit $\li M_i$. We associate a character $\chi\in\M$ given by $\chi\colon\T\rightarrow\C^*$ via $t\mapsto \chi_k(t)$ for any $k>0$ such that $t\in T_k$. By the definition of projective limit this map is well defined. It is a straightforward verification that it is a homomorphism. This proves that $\M$ is the projective limit $\li M_i$. Finally, $\M$ is isomorphic to $\Z^\omega$ by Example~\ref{split-system}.

  To prove (2), let $\lambda_i\colon \C^*\rightarrow T_i$ be a one-parameter subgroup in $N_i$. Composing with the injection $T_i\hookrightarrow \T$ we obtain a one-parameter subgroup $\lambda\colon \C^*\rightarrow \T$ of the ind-torus. This yields homomorphisms $\psi_i\colon N_i\rightarrow \N$. By the universal property of the inductive limit we have a homomorphism $\ld N_i\rightarrow \N$. On the other hand, we define the inverse homomorphism in the following way. Let $\lambda\colon\C^*\rightarrow \T$ be a one-parameter subgroup of $\T$. By the definition of morphism of ind-varieties, we have that there exists $k>0$ such that the one-parameter subgroup $\lambda$ restricts to $\lambda_k\colon \C^*\rightarrow T_k$ is a one-parameter subgroup of $T_k$. Hence, $\lambda_k\in N_k$ and composing with $\psi_k\colon N_k\rightarrow \ld N_i$ we obtain a homomorphism $\N\rightarrow \ld N_i$. By the definition of inductive limit this map is well defined. It is a straightforward verification that it is a homomorphism. Finally, $\N$ is isomorphic to $\Z^\infty$ by Example~\ref{split-system}.

  To prove (3), a routine computation shows that $\langle\ ,\ \rangle$ is bilinear and under the isomorphisms in (1) and (2) corresponds to the usual dot product defined in Lemma~\ref{specker}. This proves the proposition.
\end{proof}

In the proof of our main result, we will need the following lemma whose proof is straightforward.

\begin{lemma}\label{morph-tori}
  Let $\T$ and $\T'$ be ind-tori and let $\alpha\colon\T\rightarrow\T'$ be a ind-group homomorphism with character group $\M_{\T}$ and $\M_{\T'}$ and one-parameter subgroup group $\N_{\T}$ and $\N_{\T'}$. Then $\alpha$ induces homomorphisms $\alpha^*\colon\M_{\T'}\rightarrow \M_{\T}$ and $\alpha_*\colon \N_{\T}\rightarrow \N_{\T'}$.
\end{lemma}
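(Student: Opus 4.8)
The plan is to define the two induced maps by pre- and post-composition with $\alpha$ and then to verify that each is a well-defined group homomorphism; the content is entirely formal, so the work consists in checking that the relevant compositions remain morphisms of ind-varieties and that the group laws on $\M$ and $\N$ behave well under composition.

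First I would define $\alpha^*\colon\M_{\T'}\to\M_{\T}$ by $\alpha^*(\chi')=\chi'\circ\alpha$ for each character $\chi'\in\M_{\T'}$. Since $\chi'\colon\T'\to\C^*$ and $\alpha\colon\T\to\T'$ are both morphisms of ind-varieties, their composite $\chi'\circ\alpha\colon\T\to\C^*$ is again a morphism of ind-varieties, and since both are group homomorphisms so is the composite; hence $\chi'\circ\alpha\in\M_{\T}$ and $\alpha^*$ is well defined. To see that $\alpha^*$ is a homomorphism one uses that the group law on characters is pointwise multiplication: for $\chi'_1,\chi'_2\in\M_{\T'}$ and every $t\in\T$ we have $\alpha^*(\chi'_1\chi'_2)(t)=(\chi'_1\chi'_2)(\alpha(t))=\chi'_1(\alpha(t))\,\chi'_2(\alpha(t))=\alpha^*(\chi'_1)(t)\,\alpha^*(\chi'_2)(t)$, whence $\alpha^*(\chi'_1\chi'_2)=\alpha^*(\chi'_1)\,\alpha^*(\chi'_2)$.

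Dually, I would define $\alpha_*\colon\N_{\T}\to\N_{\T'}$ by $\alpha_*(\lambda)=\alpha\circ\lambda$ for each one-parameter subgroup $\lambda\in\N_{\T}$. As before, $\alpha\circ\lambda\colon\C^*\to\T'$ is a composite of morphisms of ind-varieties that are group homomorphisms, hence it lies in $\N_{\T'}$. The group law on one-parameter subgroups is again pointwise, and here it is precisely the fact that $\alpha$ is itself a group homomorphism that is needed: for $\lambda_1,\lambda_2\in\N_{\T}$ and $t\in\C^*$ one computes $\alpha_*(\lambda_1\lambda_2)(t)=\alpha\bigl(\lambda_1(t)\,\lambda_2(t)\bigr)=\alpha(\lambda_1(t))\,\alpha(\lambda_2(t))=\alpha_*(\lambda_1)(t)\,\alpha_*(\lambda_2)(t)$, so $\alpha_*$ is a homomorphism.

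There is no genuine obstacle: the whole statement reduces to the fact that a composite of morphisms of ind-varieties is again a morphism of ind-varieties, which is immediate from the definition in Section~\ref{sec:ind-def}, together with the pointwise description of the group structures on $\M$ and $\N$. The only point that deserves an explicit word is the well-definedness of the composites as ind-morphisms, and this holds because $\alpha$ carries each stratum of the filtration of $\T$ into some stratum of the filtration of $\T'$, so the restrictions compose to genuine morphisms of finite-dimensional varieties. This is why the lemma can fairly be described as straightforward.
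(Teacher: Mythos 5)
Your proof is correct and is exactly the ``straightforward'' argument the paper has in mind (the paper states Lemma~\ref{morph-tori} without proof): define $\alpha^*$ by precomposition and $\alpha_*$ by postcomposition, note that composites of ind-morphisms that are group homomorphisms are again such, and verify the homomorphism property pointwise. Nothing is missing.
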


\section{Pro-affine semigroup}
\label{sec:pro-affine-semi}

A semigroup is a set $(\S, +)$ with an associative binary operation. All our semigroups will be  commutative and unital. A semigroup $S$ is called affine if it is finitely generated and can be embedded in a $\Z^k$ for some $k\geq 0$. It is well known that the category of affine toric varieties with toric morphisms is dual to the category of affine semigroups with homomorphisms of semigroups. The main result of this paper is a generalization of this result to the case of affine toric ind-varieties. In this section, we define and study the semigroups $\mathcal{S}$ that will appear as the semigroup of an affine toric ind-variety $\mathcal{V}$.

Recall that the ring of regular functions $\C[\mathcal{V}]$ of and ind-variety is a pro-affine algebra and so it is endowed with a topology holding the information of the filtration of $\mathcal{V}$ \cite{Kam03}. We will first transport the notion of pro-affine algebra into the context of semigroups. A pro-affine algebra $\mathcal{A}$ is defined using a filtration of ideals on $\mathcal{A}$ and the projective limit topology induced by the quotients of $\mathcal{A}$ by the ideals in this filtration. In the case of semigroups, there exits an analog notion of ideal, but there is no bijection between ideals and quotient semigroups. For this reason, in the context of semigroups, we need the more general notion of compatible equivalence relations to keep track of all the possible quotients.

\smallskip

An equivalence relation on a set $\S$ is a subset $R\subset \S\times \S$ satisfying the usual properties of being reflexive, symmetric and transitive. An equivalence relation on a semigroup $\S$ is called compatible if for every $(m,n)$ and $(m',n')$ in $R$ we have that $(m+m',n+n')$ also belongs to $R$. In this case, the set of equivalence classes $\S/R$ inherits a natural structure of semigroup with binary operation given by $[m]+[m']=[m+m']$, where $[m]$ denotes the class of $m$ in $\S/R$.

A filtered semigroup is a couple $(\S,F)$, where $\S$ is a semigroup and $F$ is a descending filtration $R_1\supset R_2\supset\dots$ of $\S\times\S$ of compatible equivalence relations on $\S$. We denote a filtered semigroup simply by $\S$ if $F$ is clear from the context. In close analogy with \cite[Section~9.2]{N68}, the filtration of compatible equivalence relations on $\S$ defines a topology on $\S$ having basis $\{E_{m,k}\mid m\in \S, k>0\}$, where $E_{m,k}=\left\{m'\in \mathcal{S}\mid (m,m')\in R_k\right\}$ is the equivalence class of $m$ under the equivalence relation $R_k$. It is straightforward to verify that this topology coincides with the finest topology making all the quotient morphisms $\S\rightarrow \S/R_k$ continuous where $\S/R_k$ is taken with the discrete topology. The trivial equivalence relation on $\S$ corresponds to the diagonal in $\S\times\S$. The trivial filtration on a semigroup $\S$ is given by setting each equivalence relation $R_i$ to be trivial. In this case the induced topology on $\S$ is the discrete topology.

Let $\mathcal{S}$ be filtered semigroup with filtration $R_1\supset R_2\supset\dots$ of compatible equivalence relations in $\S$.  It is straightforward to verify that the topology on $\mathcal{S}$ is Hausdorff if and only if $\bigcap_{k>0} R_k$ equals the diagonal in $\S\times\S$. Additionally, we can generalize the notion of Cauchy sequence to this context of semigroups. Indeed, a sequence $\left\{a^{(i)}\right\}_{i>0}\subset\mathcal{S}$ in the semigroup is say to be Cauchy sequence if given any $k>0$ there exists an integer $N$ such that $\left(a^{(i)},a^{(j)}\right)\in R_k$ for all $i,j>N$. A direct computation shows that a convergent sequence is always Cauchy. We say that a filtered semigroup $\mathcal{S}$ is complete if every Cauchy sequence converges.

Given a projective system $S_1\leftarrow S_2\leftarrow\dots$ of semigroups we define a filtration $R_1\supset R_2\supset\dots$ of compatible equivalence relations on the projective limit $\S=\li S_i$ by $R_i=\{(m,m')\in \S\times\S\mid \pi_i(m)=\pi_i(m')\}$. The topology induced on $\S$ by this filtration coincides with the projective limit topology.

\begin{proposition}\label{proj-complete}
  Let $S_1\leftarrow S_2\leftarrow\dots$ be a projective system of semigroups where each $S_i$ carries the discrete topology. Then the projective limit semigroup $\S=\li S_i$ is Hausdorff and complete.
\end{proposition}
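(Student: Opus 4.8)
The plan is to verify the two topological properties directly from the explicit construction of the projective limit. Recall that $\S = \li S_i$ is realized as the set of coherent sequences $(s_1, s_2, \dots) \in \prod_{i>0} S_i$ satisfying $\varphi_{ij}(s_j) = s_i$ for all $i \leq j$, and that the filtration $R_1 \supset R_2 \supset \dots$ is given by $R_i = \{(m,m') \mid \pi_i(m) = \pi_i(m')\}$, which induces precisely the projective limit (subspace) topology.

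For the Hausdorff property, by the criterion recalled just before the statement, it suffices to show that $\bigcap_{k>0} R_k$ equals the diagonal. So suppose $(m, m') \in R_k$ for all $k > 0$, that is, $\pi_k(m) = \pi_k(m')$ for every $k$. Since an element of the projective limit is determined by all of its coordinates, $m$ and $m'$ have the same image under every projection and hence $m = m'$. This gives $\bigcap_{k>0} R_k = \Delta_{\S}$, so the topology is Hausdorff.

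For completeness, the plan is to take an arbitrary Cauchy sequence $\{a^{(n)}\}_{n>0} \subset \S$ and construct its limit coordinate-by-coordinate. The Cauchy condition says that for each fixed $k > 0$ there is an $N_k$ such that $(a^{(n)}, a^{(n')}) \in R_k$ for all $n, n' > N_k$, which by definition of $R_k$ means $\pi_k(a^{(n)}) = \pi_k(a^{(n')})$ for all $n, n' > N_k$. Hence for each $k$ the sequence of $k$-th coordinates $\{\pi_k(a^{(n)})\}_n$ is eventually constant; call this stable value $b_k \in S_k$. First I would check that $b = (b_1, b_2, \dots)$ is a genuine element of the projective limit, i.e.\ that $\varphi_{ij}(b_j) = b_i$ for $i \leq j$: choosing $n$ large enough that both $\pi_i(a^{(n)}) = b_i$ and $\pi_j(a^{(n)}) = b_j$ (take $n > \max(N_i, N_j)$), the coherence relation $\varphi_{ij}(\pi_j(a^{(n)})) = \pi_i(a^{(n)})$ already satisfied by $a^{(n)} \in \S$ gives $\varphi_{ij}(b_j) = b_i$ directly. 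Thus $b \in \S$.

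It then remains to verify that $a^{(n)} \to b$ in the topology on $\S$, which reduces to showing that for each $k > 0$ we eventually have $(a^{(n)}, b) \in R_k$; but for $n > N_k$ we have $\pi_k(a^{(n)}) = b_k = \pi_k(b)$ by construction, so indeed $(a^{(n)}, b) \in R_k$, giving convergence. I do not expect any step to present a genuine obstacle: the argument is essentially the standard proof that a projective limit of discrete (hence complete and Hausdorff) spaces is complete and Hausdorff, and the only mild care needed is in translating between the relation-filtration language of filtered semigroups and the projection language of the projective limit. The one point worth stating cleanly is the coordinatewise-eventual-constancy of Cauchy sequences, since the discreteness of each $S_i$ is exactly what upgrades the abstract Cauchy condition into genuine eventual equality of coordinates.
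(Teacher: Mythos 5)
Your proof is correct and follows essentially the same route as the paper's: Hausdorffness via showing $\bigcap_{k>0} R_k$ is the diagonal, and completeness by constructing the limit of a Cauchy sequence coordinate-by-coordinate from the eventual stabilization of each coordinate, which is exactly how the paper argues. If anything you are slightly more careful than the paper, which writes down the candidate limit $m=(m_1,m_2,\dots)$ and asserts it lies in $\S$ without explicitly checking the coherence relations $\varphi_{ij}(m_j)=m_i$ defining the projective limit; your verification of that step (and of the final convergence $(a^{(n)},b)\in R_k$) closes this small gap.
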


\begin{proof}
  A couple $(m,m')\subset \S\times\S$ belongs to $R_k$ if and only if $m_i=m_i'$ for all $i\leq k$. Hence, the couple $(m,m')$ belongs to $\bigcap_{k>0}R_k$ if and only if $m=m'$. We conclude that $\bigcap_{k>0}R_k$ equals the diagonal of $\S\times\S$ and so $\S$ is Hausdorff. To prove that $\S$ is complete, let $\left\{m^{(i)}\right\}_{i>0}\subset\S$ be a Cauchy sequence in $\S$. Recall that, by the definition of projective limit, each $m^{(i)}$ equals $\left(m^{(i)}_1,m^{(i)}_2,\dots\right)\in \prod_{i>0}S_i$. For every $k>0$ there exist $N$ such that $\left(m^{(i)},m^{(i+1)}\right)\in R_k$ for all $i> N$. Hence, for every $k$ there exist $N$ such that $m^{(i)}_k=m^{(i+1)}_k=m^{(i+2)}_k=\cdots$ when $i> N$. Letting $m_k=m^{(i)}_k\in S_k$ for any $i>N$, we let $m=(m_1,m_2\dots )\in \S$. Now, for every $k>0$ there exist $N$ such that $\left(m,m^{(i)}\right)\in R_k$ for all $i> N$ and so the Cauchy sequence $\left\{m^{(i)}\right\}_{i>0}\subset\S$ converges to $m$.
\end{proof}

\begin{remark} \label{limit-equal-complete} %
  If a filtered semigroup $\mathcal{S}$ with filtration $R_1\supset R_2\supset\dots$ of compatible equivalence relations in $\S$ is Hausdorff and complete, then $\li S_i$, where $S_i=\S/R_i$ with the morphism induced from $R_i\supset R_{i+1}$, is canonically isomorphic to $\S$. Indeed, the canonical map $\S\rightarrow \li S_i$ into the projective limit given by $m\mapsto (\pi_1(m),\pi_2(m),\dots)$ has inverse given by $([m_1],[m_2],\dots)\mapsto \lim m_i$, where $\{m_i\}_{i>0}$ is the Cauchy sequence given in $\S$ by $\{m_1,m_2,\dots\}$.
\end{remark}

We now define the natural notion of morphism of filtered semigroups.

\begin{definition} \label{def-morph-semi} %
  Let $S$ and $S'$ be filtered semigroups with filtrations $R_1\supset R_2\supset\dots$ and $R'_1\supset R'_2\supset\dots$, respectively.  A map $\beta\colon\S\rightarrow \S'$ is called a morphism of filtered semigroups if $\beta$ is a semigroup homomorphism and for every $i>0$ there exists $j>0$ such that $(\beta\times\beta)(R_j)\subset R'_i$.  In particular, every morphism $\beta\colon\S\rightarrow \S'$ of filtered semigroups is continuous since the condition $(\beta\times\beta)(R_j)\subset R'_i$ implies point-wise continuity at every $m\in \S$. As usual, an isomorphism $\beta\colon\S\rightarrow \S'$ of filtered semigroups is a bijective morphism whose inverse is also a morphism. We also say that two filtrations $R_1\supset R_2\supset\dots$ and $R'_1\supset R'_2\supset\dots$ on the same semigroup $\S$ are equivalent if the identity map is an isomorphism of filtered semigroups.
\end{definition}

\begin{lemma} \label{induce-semigroup-morph} %
  With the notation in Definition~\ref{def-morph-semi}, the morphism $\beta\colon\S\rightarrow \S'$ of filtered semigroups induces a natural homomorphism of semigroup $\beta_{ij}\colon S_j\rightarrow S'_i $ where $S_j=\S/R_j$ and $S'_i=\S'/R'_i$ such that the following diagram commutes.
 \begin{align*}
   \xymatrix@R+0em@C+0.7em{
     \mathcal{S} \ar[r]^{\beta}\ar[d]_{\pi_{j}} &\S'\ar[d]^{\pi'_i}\\
     S_j\ar[r]^{\beta_{ij}} & S'_i
   }
 \end{align*}
\end{lemma}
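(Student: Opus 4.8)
The plan is to construct $\beta_{ij}$ explicitly on equivalence classes and then verify, in turn, that it is well defined, that it is a semigroup homomorphism, and that it makes the square commute. First, for a fixed target index $i>0$ I would use the defining property of a morphism of filtered semigroups from Definition~\ref{def-morph-semi} to choose a source index $j>0$ with $(\beta\times\beta)(R_j)\subset R'_i$; this $j$ is exactly the index appearing in the statement. I would then define $\beta_{ij}\colon S_j\to S'_i$ on representatives by declaring $\beta_{ij}(\pi_j(m))=\pi'_i(\beta(m))$ for every $m\in\S$, so that the commutativity $\beta_{ij}\circ\pi_j=\pi'_i\circ\beta$ holds by construction once the map is shown to make sense.

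The only step needing real care --- and the crux of the lemma --- is well-definedness, and this is precisely where the compatibility condition on $\beta$ enters. If $\pi_j(m)=\pi_j(m')$, that is $(m,m')\in R_j$, then $(\beta(m),\beta(m'))=(\beta\times\beta)(m,m')\in(\beta\times\beta)(R_j)\subset R'_i$, so $\pi'_i(\beta(m))=\pi'_i(\beta(m'))$ and the value of $\beta_{ij}$ is independent of the chosen representative. Thus the inclusion of filtration relations under $\beta$ is exactly the condition that allows one to descend $\beta$ to the quotients.

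The remaining two points are purely formal. That $\beta_{ij}$ is a homomorphism follows since $\beta$, $\pi_j$ and $\pi'_i$ are all semigroup homomorphisms: for $m,n\in\S$ one computes $\beta_{ij}(\pi_j(m)+\pi_j(n))=\beta_{ij}(\pi_j(m+n))=\pi'_i(\beta(m+n))=\pi'_i(\beta(m))+\pi'_i(\beta(n))=\beta_{ij}(\pi_j(m))+\beta_{ij}(\pi_j(n))$. Commutativity of the diagram is then immediate from the defining formula, evaluated at an arbitrary $m\in\S$. I do not expect any genuine obstacle here; the single piece of conceptual content is the observation that the compatibility $(\beta\times\beta)(R_j)\subset R'_i$ is exactly what is needed to pass to the quotient semigroups, and everything else is a routine verification.
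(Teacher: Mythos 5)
Your proposal is correct and follows exactly the same route as the paper's proof: define $\beta_{ij}$ on classes by $[m]\mapsto[\beta(m)]$, use the compatibility condition $(\beta\times\beta)(R_j)\subset R'_i$ for well-definedness, and observe that the homomorphism property and commutativity are then routine. The paper's own proof is just a terser version of your argument, so there is nothing to add.
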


\begin{proof}
  The map $\beta_{ij}\colon S_j\rightarrow S_i$ defined naturally by $[m]\mapsto [\beta(m)]$ is well defined due to the condition $(\varphi\times\varphi)(R_j)\subset R'_i$. The rest of the proof is straightforward.
\end{proof}

We now define pro-affine semigroups that are the generalization of the affine semigroups that are the objects dual to classical affine toric varieties.

\begin{definition} \
  \begin{enumerate}
  \item A pro-affine semigroup $\mathcal{S}$ is a filtered semigroup with filtration $R_1\supset R_2\supset\dots$ of compatible equivalence relations in $\S$ that is complete, Hausdorff and such that every $\S/R_i$ is an affine semigroup.

  \item Let $\S$ be a filtered semigroup with filtration $R_1\supset R_2\supset\dots$. A filtered subsemigroup is a semigroup $\S'\subset \S$ endowed with the filtration of compatible equivalence relations $R_i\cap (\S'\times\S')$ on $\S'$.
  \end{enumerate}
\end{definition}

\begin{example}\label{100}\
  \begin{enumerate}
  \item We define the canonical filtration $\widetilde{R}_1\supset \widetilde{R}_2\supset\dots$ of equivalence relations on the semigroup $\Z^\omega$ by $\widetilde{R}_k=\{(m,m')\in \Z^\omega\times \Z^\omega\mid m_i=m'_i, \mbox{ for all } i\leq k\}$. By Proposition~\ref{proj-complete}, we conclude that $\Z^\omega$ is complete. Furthermore,  $\Z^\omega/R_i$ is naturally isomorphic $\Z^i$ with quotient morphism $\pi_i\colon \Z^\omega\rightarrow \Z^i$ the projection to the first $i$-th coordinates. Hence, $\Z^\omega/R_i$ is an affine semigroup and so the filtered semigroup $\Z^\omega$ is a pro-affine semigroup.

  \item The filtered subsemigroups $\Z_{\geq 0}^\omega$ of $\Z^\omega$ of arbitrary sequences of non-negative integers is also pro-affine with a similar argument as in (1).
    
  \item Any affine semigroup $S\subset\Z^i$ with the constant filtration given by the trivial equivalence relation is pro-affine.
    
  \item Let $e_i=(0,\dots ,0,1,0,\dots)\in \Z^\omega$, where the non-zero coefficient is located in the position $i>0$. The subsemigroup $\S=\Z_{\geq 0}^\omega\setminus\left\{e_1\right\}$ of $\Z^\omega$ is not complete and so is not pro-affine. Indeed, the sequence $\left\{a_i=e_1+e_i\right\}_{i>0}$ is Cauchy but not convergent in $\S$.
  \end{enumerate}
\end{example}

\begin{theorem} \label{pro-affine-embedding} %
  Let $\S$ be a pro-affine semigroup, then $\S$ is isomorphic to a filtered subsemigroup of $\Z^\omega$. Moreover, we can assume that $\S$ is embedded in $\M$ with $\Z\S=\M$, where  $\M\simeq\Z^\omega$ or $\M\simeq\Z^k$ for some $k>0$.
\end{theorem}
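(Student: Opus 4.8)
The plan is to realize $\S$ as a projective limit, groupify level by level, and transport the split-system computation of Example~\ref{split-system}. Since $\S$ is complete and Hausdorff, Remark~\ref{limit-equal-complete} gives a canonical isomorphism $\S \simeq \li S_i$, where $S_i = \S/R_i$ is affine and the transition map $\psi_i \colon S_{i+1} \to S_i$ induced by $R_i \supset R_{i+1}$ is surjective. First I would pass to the groupifications $M_i = \Z S_i$. Because $S_i$ is affine it embeds in some $\Z^{d_i}$, so $M_i$ is a finitely generated free abelian group and $S_i \hookrightarrow M_i = S_i - S_i$. The surjection $\psi_i$ extends to a surjective homomorphism $M_{i+1} \to M_i$ of free abelian groups, which therefore splits. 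Hence $M_1 \leftarrow M_2 \leftarrow \dots$ is a split projective system, and after passing to a subfiltration so that the ranks (which are non-decreasing) are either eventually constant or strictly increasing, Example~\ref{split-system} yields $\M := \li M_i \simeq \Z^k$ or $\M \simeq \Z^\omega$.

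Next I would produce the embedding. Projective limits are left exact, so the inclusions $S_i \hookrightarrow M_i$ assemble into an injection $\S = \li S_i \hookrightarrow \li M_i = \M$; concretely a compatible sequence $(s_i)$ with $s_i \in S_i$ is sent to itself viewed in $\prod M_i$. The filtration $R_i$ on $\S$ identifies two elements exactly when they have the same image in $S_i$, and this is the restriction to $\S$ of the canonical filtration $\widetilde{R}_i$ of $\M \simeq \Z^\omega$ determined by the projections $\M \to M_i$; since a subfiltration defines an equivalent filtered structure, this shows $\S$ is a filtered subsemigroup of $\Z^\omega$ (or of $\Z^k$ in the finite-dimensional case), proving the first assertion.

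It remains to arrange $\Z\S = \M$. The easy half is density: since the transition maps are surjective, each projection $\pi_i \colon \M \to M_i$ is surjective and restricts on $\S$ to the quotient map onto $S_i$, whence $\pi_i(\Z\S) \supseteq \Z\,\pi_i(\S) = \Z S_i = M_i$, so $\Z\S$ surjects onto every $M_i$ and is therefore dense in $\M$. The content is to upgrade density to equality, i.e.\ to show that $\S$ already generates the whole Baer--Specker group $\M$. I would prove this by showing $\Z\S = \S - \S$ is closed in $\M$: given a target $g = (g_i) \in \M$, build compatible sequences $(a_i), (b_i) \in \li S_i = \S$ with $a_i - b_i = g_i$ inductively, using at each step the surjectivity of $\psi_i$ to land $a_i$ in the correct fiber and the finite generation of $S_{i+1}$ to choose the lift deep enough inside $S_{i+1}$ that $a_{i+1} - g_{i+1}$ again lies in $S_{i+1}$, and then passing to the limit.

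The main obstacle is precisely this last lifting step: at a single finite level the fiber of $\psi_i$ over $a_i$ may be too small to absorb $g_{i+1}$, since the kernel monoid $\psi_i^{-1}(0)\cap S_{i+1}$ need not generate $\ker(M_{i+1}\to M_i)$, so the lifts cannot be chosen independently level by level. The fix is to coordinate the choices across levels, enlarging the base point $a_i$ as the construction proceeds and recording the partial solutions as a Cauchy sequence in $\S$, and then to invoke completeness (as in Proposition~\ref{proj-complete}) to extract an honest element of $\S = \li S_i$ realizing $g$ as a difference. This is where completeness of $\S$, rather than mere surjectivity of the transition maps, is essential, matching the fact that the non-complete semigroup of Example~\ref{100}(4) fails the analogous conclusion.
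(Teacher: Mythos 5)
Your proof of the first assertion is correct and is essentially the paper's own argument: pass to the quotients $S_i=\S/R_i$ and identify $\S\simeq\li S_i$ via Remark~\ref{limit-equal-complete}, groupify levelwise to obtain a split projective system of finitely generated free abelian groups $M_i=\Z S_i$, identify $\M=\li M_i$ with $\Z^k$ or $\Z^\omega$ through Example~\ref{split-system}, and embed $\S\hookrightarrow\M$ compatibly with the filtrations. You are, if anything, more explicit than the paper about the rank dichotomy and about why the induced filtration on $\S$ is the restriction of the canonical one.

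The genuine gap is in your treatment of the ``moreover'' clause, and it is not a gap you can close, because the statement you set out to prove --- the literal equality $\Z\S=\S-\S=\M$ --- is false in general. You correctly isolated what the construction really gives (the levelwise equalities $\Z S_i=M_i$, i.e.\ density of $\Z\S$ in $\M$) and correctly identified the obstruction to lifting differences level by level; but no ``coordination across levels'' can make the partial solutions Cauchy. Consider $\S=\{m\in\Z_{\geq 0}^\omega \mid m_j\leq m_1 \mbox{ for all } j\geq 2\}$ with the filtration induced from $\Z^\omega$: it is a closed subsemigroup, each $\pi_i(\S)=\{m\in\Z^i_{\geq 0}\mid m_j\leq m_1\}$ is finitely generated (Gordan's lemma), so $\S$ is Hausdorff, complete and has affine quotients, hence pro-affine, and $\Z\pi_i(\S)=\Z^i$, so we are in the case $\M=\Z^\omega$. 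For $g=(0,1,2,3,\dots)$, any $a,b\in\S$ with $\pi_i(a)-\pi_i(b)=\pi_i(g)$ must satisfy $a_1\geq a_i\geq i-1$; consequently $g\notin\S-\S$, and every sequence of partial solutions has first coordinate tending to infinity, hence is never Cauchy --- exactly the step where your plan breaks, and completeness cannot rescue it. Moreover the failure is intrinsic, not an artifact of the chosen embedding: since $\S$ is cancellative, $\Z\S$ computed inside any embedding is the difference group $\S-\S$, which here is the group of all bounded integer sequences; that group is free abelian of rank $2^{\aleph_0}$ (Specker--N\"obeling), whereas $\Z^\omega$ is not free (Baer) and $\Z^k$ is countable, so no embedding of this $\S$ satisfies $\Z\S=\M$ with $\M\simeq\Z^\omega$ or $\M\simeq\Z^k$. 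What your attempt actually uncovers is a gap in the paper itself: its entire proof of the second statement is the sentence that it ``follows directly from the construction'', which is valid only for the levelwise reading $\Z S_i=M_i$ (the reading that Lemma~\ref{S-span-M} needs), while the literal reading is what the proof of Proposition~\ref{homo-morphism-prop} later uses when it extends $\beta$ to all of $\M$ via $m-m'\mapsto\beta(m)-\beta(m')$. The honest conclusion of your (and the paper's) construction is the density statement, not the equality you were trying to prove.
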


\begin{proof}
  Letting $R_1\supset R_2\supset \dots$ be the filtration of compatible equivalence relations in $\S$ we let $S_i=\S/R_i$ and $\varphi_i\colon S_{i+1}\rightarrow S_i$ be the homomorphisms given by the inclusions $R_i\supset R_{i+i}$. Hence, we have a commutative diagram
  \begin{equation*}
  \xymatrix@R+0em@C+0.7em{
 S_1 \ar[d]&\ar[l]_-  {}S_2 \ar[d] &\ar[l]_- {} S_3\ar[d]&\ar[l]_-{}\cdots\\
\Z S_1 &\ar[l]^- {} \Z S_2 & \ar[l]^-{} \Z S_3&\ar[l]^-{} \cdots
  }
\end{equation*}
where $\Z S_i$ is the group generated by $S_i$ for any embedding $S_i\hookrightarrow \Z^k$ and the homomorphisms $\Z S_{i+1}\rightarrow \Z S_{i}$ are induced by $S_{i+1}\rightarrow S_{i}$, for all $i>0$. Since the homomorphisms in the upper system are surjective, the same holds for the lower system. Hence, the lower projective system is split. If the homomorphisms in the lower system become also injective for $i$ large enough, then the projective limit of the lower system is isomorphic to $\Z^k$ for some $k\geq0$. Furthermore, since $\Z^k$ is embedded in $\Z^\omega$ the first statement follows in this case. Assume now that there is no integer $i>0$ such that the homomorphisms in the lower system become injective for all integer $j>i$. In this case, by Example~\ref{split-system} we have that the lower projective limit is isomorphic to $\Z^\omega$ and under this isomorphism we have that $\li S_i\subset \Z^\omega$ is an embedding of filtered semigroups. Since $\S$ is Hausdorff and complete, by Remark~\ref{limit-equal-complete} we have $\S=\li S_i$. The second statement follows directly from the construction above in this proof.
\end{proof}

In the following example we show the surprising consequence of the
Specker Theorem (Lemma~\ref{specker}) that every group homomorphism
$\beta\colon\Z^\omega\rightarrow \Z^\omega$ is a morphism of filtered
semigroups for the canonical filtration $\widetilde{R}_i$.

\begin{example} \label{homo-morphism-ex}
  \begin{enumerate}
  \item Every homomorphism $\beta\colon\Z^\omega\rightarrow \Z^\omega$ is a morphism of filtered semigroups with respect to the canonical filtration. Indeed, since $\Z^\omega$ is a group, we have that $E_{0,k}$ is a subgroup of $\Z^\omega$ and 
    $$\widetilde{R}_k=\bigcup_{m\in \Z^\omega}(m+E_{0,k})\times(m+E_{0,k})\,$$ %
    Hence, it is enough to show that for every $i>0$ there exists $j>0$ such that $\beta(E_{0,j})\subset E_{0,i}$. By Lemma~\ref{specker}, the composition $\pi_i\circ\beta\colon \Z^\omega\rightarrow \Z^i$ corresponds to an element in $(p_1,\dots,p_i)\in(\Z^\infty)^i$ under the isomorphism $\operatorname{Hom}(\Z^\omega,\Z)\simeq \Z^\infty$ given by the duality map, see also \cite[Theorem 94.3 and Corollary 94.5]{F73}. By definition of inductive limit, each $p_i\in \Z^{j_i}$ for some $j_i>0$. Taking $j$ to be the maximum of $\{j_1,\dots,j_i\}$ we obtain that $\beta(E_{0,j})\subset E_{0,i}$.

  \item A similar argument shows that every homomorphism $\beta\colon\Z^\omega\rightarrow \Z^k$ is a morphism of filtered semigroups with respect to the canonical filtration in $\Z^\omega$ and trivial filtration in $\Z^k$, for every $k\geq0$.
  \end{enumerate}
\end{example}

The above example allows us to prove that every homomorphism between pro-affine semigroups is a morphism.

\begin{proposition} \label{homo-morphism-prop} %
  Let $\S$ and $\S'$ be pro-affine semigroups. If $\beta\colon\S\rightarrow \S'$ is any homomorphism of semigroups then $\beta$ is a morphism of filtered semigroups.
\end{proposition}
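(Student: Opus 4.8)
The plan is to reduce the statement to Example~\ref{homo-morphism-ex}, which already delivers the automatic continuity of group homomorphisms between the ambient groups $\Z^\omega$ and $\Z^k$. First I would invoke Theorem~\ref{pro-affine-embedding} to realize $\S$ as a filtered subsemigroup of a group $\M$ with $\Z\S=\M$ and $\M\simeq\Z^\omega$ or $\M\simeq\Z^k$, and likewise $\S'\hookrightarrow\M'$ with $\Z\S'=\M'$. Since being a morphism of filtered semigroups depends only on the equivalence class of the filtrations, and the filtration of $\M$ produced by the theorem is equivalent to the canonical filtration on $\Z^\omega$ by the discussion of split systems in Example~\ref{split-system}, I may assume that $\M$ and $\M'$ carry their canonical filtrations $\widetilde{R}_i$ and $\widetilde{R}'_i$ and that $R_i=\widetilde{R}_i\cap(\S\times\S)$ and $R'_i=\widetilde{R}'_i\cap(\S'\times\S')$. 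With this normalization the goal of Definition~\ref{def-morph-semi} becomes: for every $i>0$ there is $j>0$ with $(\beta\times\beta)(R_j)\subset R'_i$.

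Next I would extend $\beta$ to a group homomorphism $\bar\beta\colon\M\to\M'$. Because $\M=\Z\S$, every element of $\M$ is a difference $m-m'$ with $m,m'\in\S$, and I would set $\bar\beta(m-m')=\beta(m)-\beta(m')$. This is well defined: if $m-m'=n-n'$ then $m+n'=m'+n$ in $\S$, and applying the semigroup homomorphism $\beta$ yields $\beta(m)-\beta(m')=\beta(n)-\beta(n')$. It is clearly additive, and since $\beta$ is unital it restricts to $\beta$ on $\S$. Thus $\bar\beta$ is a genuine group homomorphism between two groups each isomorphic to $\Z^\omega$ or $\Z^k$.

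Now I would apply Example~\ref{homo-morphism-ex}. If $\M\simeq\Z^\omega$, then regardless of whether $\M'\simeq\Z^\omega$ or $\M'\simeq\Z^k$, part (1) or (2) of that example shows that $\bar\beta$ is a morphism of filtered semigroups for the canonical filtrations, i.e.\ for every $i>0$ there is $j>0$ with $(\bar\beta\times\bar\beta)(\widetilde{R}_j)\subset\widetilde{R}'_i$. If instead $\M\simeq\Z^k$, then the lower system in the proof of Theorem~\ref{pro-affine-embedding} stabilizes, so $\pi_i\colon\S\to S_i$ is injective for large $i$ and $R_i$ is eventually the diagonal; since $\beta$ is unital, $(\beta\times\beta)(R_j)$ then lies in the diagonal of $\S'\times\S'$, hence in every $R'_i$, and the property holds trivially. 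In the genuine case I would finish by restricting to $\S$: using $R_j=\widetilde{R}_j\cap(\S\times\S)$ and that $\bar\beta$ agrees with $\beta$ on $\S$,
\[
(\beta\times\beta)(R_j)\subset(\bar\beta\times\bar\beta)(\widetilde{R}_j)\cap(\S'\times\S')\subset\widetilde{R}'_i\cap(\S'\times\S')=R'_i,
\]
which is exactly the condition required.

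The conceptual heart of the argument—automatic continuity of group homomorphisms of $\Z^\omega$, resting on the Specker theorem (Lemma~\ref{specker})—is already packaged in Example~\ref{homo-morphism-ex}, so inside this proof the only real work is the bookkeeping: extending $\beta$ to the ambient groups and verifying that the filtrations coming from Theorem~\ref{pro-affine-embedding} are equivalent to the canonical ones so that the example applies. I expect this filtration-matching, together with the separate (trivial) treatment of the finite-dimensional factors, to be the main obstacle, although it is routine rather than deep.
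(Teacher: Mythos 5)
Your proof is correct and follows essentially the same route as the paper's: embed $\S$ and $\S'$ into their ambient groups via Theorem~\ref{pro-affine-embedding}, extend $\beta$ to a group homomorphism $\M\rightarrow\M'$, invoke Example~\ref{homo-morphism-ex} (the Specker-theorem input), and restrict back to the filtered subsemigroups. You are in fact somewhat more careful than the paper, which omits the well-definedness check for the extension $m-m'\mapsto\beta(m)-\beta(m')$ and the explicit restriction computation $(\beta\times\beta)(R_j)\subset\widetilde{R}'_i\cap(\S'\times\S')=R'_i$.
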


\begin{proof}
  By Theorem~\ref{pro-affine-embedding}, we can assume that $\S$ is a subsemigroup of $\M=\Z^\omega$ or $\M=\Z^k$ for some $k\geq 0$ with $\Z S=\M$. Similarly, we can assume that $\S'$ is a subsemigroup of $\M'=\Z^\omega$ or $\M'=\Z^\ell$ for some $\ell\geq 0$ with $\Z \S'=\M'$. The homomorphism $\beta$ can be extended to a homomorphism $\widehat{\beta}\colon \M\rightarrow \M'$ via $m-m'\mapsto \beta(m)-\beta(m')$.  If $\M=\Z^k$, then $\widehat{\beta}$ is trivially a morphism of filtered semigroups since the filtration by equivalence relation on $\Z^k$ is trivial. Furthermore, if $\M=\Z^\omega$ the homomorphism $\widehat{\beta}$ is also a morphism of filtered semigroups by Example~\ref{homo-morphism-ex}. Now, the proposition follows since $\S$ and $\S'$ are filtered subsemigroups of $\M$ and $\M'$, respectively.
\end{proof}

\begin{remark} \label{unique-structure} %
  It follows from Proposition~\ref{homo-morphism-prop} above that two different filtrations $R_1\supset R_2\supset\dots$ and $R'_1\supset R'_2\supset\dots$ of compatible equivalence relations in a pro-affine semigroup $\S$ are always equivalent since the identity is an isomorphism of semigroups and so it is also an isomorphism of filtered semigroups. 
\end{remark}

It is straightforward to prove, mimicking the classical argument for metric spaces, that a subsemigroup in a complete filtered semigroup is complete if and only if it is closed. This allows us to derive the following corollary that acts as alternative definition of pro-affine semigroups. Recall that $\Z^\omega/\widetilde{R}_i$ is naturally isomorphic $\Z^i$ with quotient morphism $\pi_i\colon \Z^\omega\rightarrow \Z^i$ the projection to the first $i$-th coordinates.

\begin{corollary} \label{other-def-pro-affine} %
  An abstract semigroup $\S$ admits a filtration by compatible equivalence relations on $\S$ making $\S$ a pro-affine semigroup if and only if there exists an embedding $\iota\colon\S\hookrightarrow \Z^\omega$ where $\iota(\S)$ is closed and $(\pi_i\circ\iota)(\S)$ is finitely generated for every $i>0$. Moreover, if such a filtration exits, then it is unique (up to equivalence).
\end{corollary}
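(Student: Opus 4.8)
The plan is to prove the two implications separately and then dispatch uniqueness using results already at hand. Throughout I identify $\S$ with its image and write $\pi_i\colon\Z^\omega\to\Z^i$ for the projection to the first $i$ coordinates, whose kernel relation is the canonical $\widetilde{R}_i$.

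For the ``only if'' direction I would start from a filtration making $\S$ pro-affine and invoke Theorem~\ref{pro-affine-embedding} to obtain an embedding $\iota\colon\S\hookrightarrow\Z^\omega$ which is an isomorphism of filtered semigroups onto $\iota(\S)$ carrying the induced filtration $\widetilde{R}_i\cap(\iota(\S)\times\iota(\S))$. Closedness of $\iota(\S)$ then follows from the completeness criterion recalled just above the corollary: $\iota(\S)$ is complete (being isomorphic to the complete $\S$), hence closed in the complete filtered semigroup $\Z^\omega$ of Example~\ref{100}(1). For finite generation of $(\pi_i\circ\iota)(\S)$, the key observation is that $\pi_i\circ\iota\colon\S\to\Z^i$ is a homomorphism into the (trivially filtered, hence pro-affine) semigroup $\Z^i$; by Proposition~\ref{homo-morphism-prop} it is a morphism of filtered semigroups, so by Definition~\ref{def-morph-semi} there is $j>0$ with $((\pi_i\circ\iota)\times(\pi_i\circ\iota))(R_j)$ contained in the diagonal of $\Z^i$. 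Thus $\pi_i\circ\iota$ factors through the affine semigroup $S_j=\S/R_j$, and $(\pi_i\circ\iota)(\S)$ is the image of a finitely generated semigroup, hence finitely generated.

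For the ``if'' direction I would equip $\S$ with the pulled-back filtration $R_i=(\iota\times\iota)^{-1}(\widetilde{R}_i)$, that is, $(m,m')\in R_i$ exactly when $\pi_i(\iota(m))=\pi_i(\iota(m'))$. Each $R_i$ is a compatible equivalence relation because $\widetilde{R}_i$ is one and $\iota$ is a homomorphism, and the chain is descending because $\widetilde{R}_1\supset\widetilde{R}_2\supset\cdots$. The Hausdorff property is immediate from $\bigcap_i\widetilde{R}_i=\Delta_{\Z^\omega}$ together with injectivity of $\iota$. Completeness again follows from the completeness criterion, since $\iota(\S)$ is assumed closed in the complete $\Z^\omega$. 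Finally $R_i$ is precisely the kernel relation of $\pi_i\circ\iota$, so $\S/R_i\cong(\pi_i\circ\iota)(\S)$ is a subsemigroup of $\Z^i$ that is finitely generated by hypothesis, hence affine; this makes $\S$ pro-affine.

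Uniqueness up to equivalence is then nothing new: any two filtrations making $\S$ pro-affine are equivalent by Remark~\ref{unique-structure}, since the identity is a semigroup isomorphism and hence a morphism of filtered semigroups in both directions by Proposition~\ref{homo-morphism-prop}. The only step requiring genuine care is the finite-generation claim in the ``only if'' part: one should \emph{not} try to match the index $i$ of $\widetilde{R}_i$ with the filtration of $\S$ coordinate by coordinate (these filtrations are only equivalent, not equal, after the embedding), but instead use the factorization through some $S_j$ to exhibit a finitely generated semigroup surjecting onto $(\pi_i\circ\iota)(\S)$. Everything else reduces to a routine check of the defining conditions of a pro-affine semigroup.
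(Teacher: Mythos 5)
Your proof is correct and follows essentially the same route as the paper's: the ``only if'' direction via Theorem~\ref{pro-affine-embedding}, the ``if'' direction by pulling back the canonical filtration $\widetilde{R}_i$ so that $\S/R_i\simeq(\pi_i\circ\iota)(\S)$ with completeness coming from closedness in $\Z^\omega$, and uniqueness from Proposition~\ref{homo-morphism-prop} and Remark~\ref{unique-structure}. The only difference is that you spell out what the paper leaves implicit in citing Theorem~\ref{pro-affine-embedding} --- closedness via the complete-iff-closed criterion and finite generation of $(\pi_i\circ\iota)(\S)$ by factoring through some affine quotient $S_j$ --- which is a welcome clarification rather than a new approach.
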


\begin{proof}
  If $\S$ admits a structure of pro-affine semigroup, then the corollary follows from Theorem~\ref{pro-affine-embedding}. On the other hand, if $\S$ is embedded in $\Z^\omega$, then it inherits a filtration $R_1\supset R_2\supset$ from this embedding. By definition $\S/R_i\simeq (\pi_i\circ\iota)(\S)$ which is assume to be finitely generated. Furthermore, $\S$ is complete with the induced filtration since $\iota(S)$ is closed in $\Z^\omega$ and $\S$ is Hausdorff since $\ZZ^\omega$ is. This yields that $\S$ is a pro-affine semigroup with this filtration. Finally, the uniqueness statement follows from Proposition~\ref{homo-morphism-prop} and Remark~\ref{unique-structure}.
\end{proof}

\section{Affine toric ind-varieties and pro-affine semigroups}
\label{sec:dual-category}

In this section we prove that the category of affine toric ind-varieties with toric morphisms is dual to the category of pro-affine semigroups with homomorphisms of semigroups.

Given an affine toric ind-variety $\mathcal{V}$ with toric filtration $V_1\hookrightarrow V_2\hookrightarrow\dots$, applying the functor $\S(\bigcdot)$ defined in Section~\ref{sec:toric-varieties}, we obtain a projective system
\begin{align*}\label{variety-to-semigroup}
  \xymatrix@R+0.1em@C+0.7em{
    \ar@{~>}[d] \ar[r]^{\varphi_1}V_1 &\ar@{~>}[d]\ar[r]^{\varphi_2} V_2 & \ar@{~>}[d]\ar[r]^{\varphi_3} V_3& \cdots \\
    S_1 &\ar[l]_{\S(\varphi_1)}S_2  &\ar[l]_{\S(\varphi_2)} S_3&\ar[l]_{\S(\varphi_3)}\cdots
  }
\end{align*}
where each semigroup $S_i=\S(V_i)$ is the affine semigroup associated to the toric variety $V_i$, i.e, $\C[V_i]=\C[S_i]$ and $\S(\varphi_i)\colon S_{i+1}\rightarrow S_i$ is the semigroup homomorphism corresponding to the toric morphism $\varphi_i\colon V_i\rightarrow V_{i+1}$ \cite[Proposition~1.3.14]{CLS11}. We define the semigroup $\S(\V)$ associated to $\mathcal{V}$ as the projective limit $\li S_i$ of this projective system. By Proposition~\ref{proj-complete} and the paragraph preceding it, we have that $\S(\V)$ is a pro-affine semigroup.

On the other hand, given a pro-affine semigroup $\S$ with the filtration $R_1\supset R_2\supset\dots$ of compatible equivalence relations on $\S$, we let $S_1\leftarrow S_2\leftarrow\dots $ be the associated projective system of semigroups where each $S_i=\S/R_i$ is an affine semigroup and the homomorphisms $\varphi_i\colon S_{i+1}\rightarrow S_i$ are given by $[m]_{i+1}\mapsto [m]_i$, where $[m]_i$ is the class of $m\in \S$ inside the quotient $S_i$. The homomorphisms $\varphi_i$ are surjetive. Hence, applying the functor $\V(\bigcdot)$ defined in Section~\ref{sec:toric-varieties} for toric varieties, we obtain an inductive system of closed embeddings
\begin{align*}
  \xymatrix@R+0.1em@C+0.7em{
  S_1 \ar@{~>}[d]&\ar[l]_{\varphi_1}S_2 \ar@{~>}[d] &\ar[l]_{\varphi_2} S_3\ar@{~>}[d]&\ar[l]_{\varphi_3}\cdots\\
  \ar[r]^{\V(\varphi_1)}V_1 &\ar[r]^{\V(\varphi_2)} V_2 & \ar[r]^-{\V(\varphi_3)} V_3& \cdots}
\end{align*}
where each $V_i=\V(S_i)$ is the toric variety associated to the semigroup $S_i$ and $\V(\varphi_i)\colon V_i\rightarrow V_{i+1}$ is the toric morphism corresponding to the semigroup homomorphism $\varphi_i\colon S_{i+1}\rightarrow S_i$. The corresponding inductive limit $\ld V_i$ of this system is an affine toric ind-variety by Theorem~\ref{toric-filtration} that we denote by $\V(\S)$. The  ind-torus acting on $\V(\S)$ is $\T=\ld T_i$, where $T_i$ is the algebraic torus acting on $V_i$. It is clear that these constructions provide a bijection between affine toric varieties and pro-affine semigroups up to isomorphisms.

Let now $\V$ be an affine toric ind-variety and let $\S=\S(\V)$. In general, projective limits do not commute with direct sums, hence we cannot expect to have, as in the classical case, an isomorphism between the ring of regular functions $\C[\mathcal{V}]$ on $\V$ and the semigroup algebra $\C[\mathcal{S}]$, see Example \ref{ex-referee} below. Nevertheless, the semigroup algebra carries a natural descending filtration of ideals $ I_1\supset I_2\supset \dots$, where $I_{i}=\ker \pi_{i}$ and $\pi_i$ is the natural projection $\pi_i\colon\C[\S]\rightarrow \C[V_i]$, for all $i>0$ induced by the projections $\overline{\pi}_i\colon\S\rightarrow S_i$ coming from the projective limit. It follows directly from \cite[Chapter~9, Theorem~10]{N68} that the algebra $\C[\V]$ is the completion of $\C[\S]$ with respect to $I_1\supset I_2\supset \dots$.

In the following proposition, we summarize the considerations above.

\begin{proposition} \label{equivalence-object} %
  The assignments $\V\mapsto S(\V)$ for every affine toric ind-variety and $S\mapsto \V(\S)$ for every pro-affine semigroup are inverses up to isomorphism, i.e., $\V(\S(\V))$ is isomorphic to $\V$ for every affine toric ind-variety and $\S(\V(\S))$ is isomorphic to $\S$ for every pro-affine semigroup $\S$. Furthermore, for every affine toric ind-variety $\V$, the ring of regular functions $\C[\V]$ is isomorphic as filtered algebra to the completion of $\C[\S]$.
\end{proposition}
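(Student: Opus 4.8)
The plan is to prove the three assertions separately: the two inverse-up-to-isomorphism identities and then the filtered-algebra statement, the last of which essentially records the identification already made in the paragraph preceding the proposition.

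First I would check $\S(\V(\S))\cong\S$. Starting from a pro-affine semigroup $\S$ with filtration $R_1\supset R_2\supset\cdots$ and $S_i=\S/R_i$, the ind-variety $\V(\S)=\ld V_i$ has the toric filtration $V_i=\V(S_i)$ with toric transition morphisms $\V(\varphi_i)$. Applying $\S(\bigcdot)$ to this toric filtration returns the projective system $(S_i,\varphi_i)$, because $\V(\bigcdot)$ and $\S(\bigcdot)$ are mutually inverse on classical affine toric varieties and their morphisms (Section~\ref{sec:toric-varieties}); thus $\S(V_i)=S_i$ and $\S(\V(\varphi_i))=\varphi_i$. Hence $\S(\V(\S))=\li S_i$, and since $\S$ is Hausdorff and complete, Remark~\ref{limit-equal-complete} identifies $\li S_i$ canonically with $\S$.

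Next I would check $\V(\S(\V))\cong\V$. By Theorem~\ref{toric-filtration} fix a toric filtration $V_1\hookrightarrow V_2\hookrightarrow\cdots$ of $\V$ and put $S_i=\S(V_i)$, so $\S(\V)=\li S_i$. The crucial point is that each closed embedding $\varphi_i$ is dual, via \cite[Proposition~1.3.14]{CLS11}, to a \emph{surjective} semigroup homomorphism $\S(\varphi_i)\colon S_{i+1}\to S_i$: a closed embedding of affine toric varieties corresponds to a surjection of semigroup algebras $\C[S_{i+1}]\twoheadrightarrow\C[S_i]$, and since $\{\chi^n\}_{n\in S_i}$ is a basis this forces $\S(\varphi_i)$ to be surjective. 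For a projective system indexed by the positive integers with surjective transition maps each coordinate projection $\overline{\pi}_i\colon\li S_j\to S_i$ is surjective, so $\S(\V)/R_i\cong S_i$. Therefore $\V(\S(\V))=\ld\V(S_i)=\ld V_i=\V$; independence of the chosen toric filtration follows because any two are equivalent.

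Finally, for the filtered-algebra statement I would reuse the surjectivity just established. By definition $\C[\V]=\li\C[V_i]=\li\C[S_i]$ carries the projective-limit filtration by the ideals $\ker(\C[\V]\to\C[V_i])$. Surjectivity of $\overline{\pi}_i\colon\S\to S_i$ gives a surjection $\C[\S]\twoheadrightarrow\C[S_i]$ with kernel $I_i$, so $\C[\S]/I_i\cong\C[S_i]=\C[V_i]$ compatibly with the transition maps; by \cite[Chapter~9, Theorem~10]{N68} the completion of $\C[\S]$ along $I_1\supset I_2\supset\cdots$ is $\li\C[\S]/I_i\cong\li\C[V_i]=\C[\V]$, and this isomorphism matches the completion filtration with the defining filtration of $\C[\V]$, yielding the isomorphism of filtered algebras. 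The main obstacle is the surjectivity step in the second part: it is exactly this that guarantees $\S(\V)/R_i$ recovers $S_i$ on the nose --- rather than some proper subsemigroup --- so that $\V(\S(\V))$ reconstructs the original filtration and not a smaller ind-variety; the remaining compatibilities of the functors with the limits and the filtrations are routine given Remark~\ref{limit-equal-complete} and the classical duality recalled in Section~\ref{sec:toric-varieties}.
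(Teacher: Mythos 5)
Your proof is correct and follows essentially the same route as the paper, which offers no separate proof but states the proposition as a summary of the constructions immediately preceding it (level-wise application of the classical duality along toric filtrations, Remark~\ref{limit-equal-complete} to recover $\S$ from $\li S_i$, and \cite[Chapter~9, Theorem~10]{N68} for the completion statement). If anything, you make explicit a point the paper leaves implicit: that closed toric embeddings dualize to \emph{surjective} semigroup homomorphisms, which is exactly what guarantees $\S(\V)/R_i\cong S_i$ (so that $\S(\V)$ is pro-affine and $\V(\S(\V))$ reconstructs the original filtration rather than a smaller one).
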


We will also need the following lemma generalizing the usual equivalent statement in the classical case.

\begin{lemma} \label{S-span-M} %
  Let $\V$ be an affine toric ind-variety with acting ind-torus $\T$ whose character lattice is $\M$. Then $\S(\V)$ is naturally embedded in $\M$ with $\Z\S(\V)=\M$. On the other hand, let $\S$ be a pro-affine semigroup embedded in $\M\simeq\Z^\omega$ or $\M\simeq\Z^k$ for some $k\geq 0$ as filtered semigroup with $\Z \S=\M$. Then the character lattice of the ind-torus $\T$ acting on the affine toric ind-variety $\V(\S)$ is naturally isomorphic to $\M$.
\end{lemma}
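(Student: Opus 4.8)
The plan is to treat the two directions of the lemma separately and in both cases exploit the explicit description of characters of an ind-torus from Proposition~\ref{ind-torus-characters}, together with the projective-limit construction of $\S(\V)$. Recall that $\S(\V)=\li S_i$ where $S_i=\S(V_i)$ is the affine semigroup of the toric variety $V_i$, and that by Proposition~\ref{ind-torus-characters} the character lattice of $\T=\ld T_i$ is $\M=\li M_i$, where $M_i$ is the character lattice of $T_i$. For the first statement, I would start from the classical fact that for each finite-dimensional toric variety $V_i$ the semigroup $S_i$ is naturally embedded in $M_i$ with $\Z S_i=M_i$. Taking projective limits of the inclusions $S_i\hookrightarrow M_i$ yields a map $\S(\V)=\li S_i\hookrightarrow \li M_i=\M$; I would check this is injective (injectivity is preserved by projective limits, as each coordinate map is injective) so that $\S(\V)$ is naturally embedded in $\M$.

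The more delicate point in the first direction is the equality $\Z\S(\V)=\M$. Here one must be careful, since in general $\Z(\li S_i)$ need not equal $\li(\Z S_i)$: passing to the group generated does not commute with projective limits. My plan is to argue as in the proof of Theorem~\ref{pro-affine-embedding}. There the lower projective system $\Z S_1\leftarrow \Z S_2\leftarrow\dots$ is shown to be split (because the transition maps $S_{i+1}\to S_i$ are surjective, hence so are the maps $\Z S_{i+1}\to \Z S_i$), and Example~\ref{split-system} identifies its limit with $\Z^\omega$ (or $\Z^k$). Since $\Z S_i=M_i$, this says precisely $\li M_i=\M$, and the construction there simultaneously realizes $\li S_i=\S(\V)$ as a filtered subsemigroup of $\M$ generating it as a group. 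So I would invoke Theorem~\ref{pro-affine-embedding} directly, noting that the $\M$ produced there is canonically the character lattice $\li M_i$ by Proposition~\ref{ind-torus-characters}(1). I expect the main obstacle to be this commutation-of-limits-and-group-envelope issue, which is exactly why one routes through the splitness argument rather than a naive limit interchange.

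For the converse statement, I would run the construction of $\V(\S)$ from Section~\ref{sec:dual-category} and track the character lattices. Given $\S\subset\M$ with $\Z\S=\M$ and filtration $R_i$, set $S_i=\S/R_i$; these are affine semigroups and $\V(\S)=\ld\V(S_i)$ with acting ind-torus $\T=\ld T_i$. The character lattice of $T_i$ is $M_i:=\Z S_i$, the group generated by the affine semigroup $S_i$. By Proposition~\ref{ind-torus-characters}(1), the character lattice of $\T$ is $\li M_i=\li \Z S_i$. Thus the task reduces to showing $\li \Z S_i\simeq\M$ naturally. Since $\S\subset\M$ is closed with $\Z\S=\M$ and the $S_i=\pi_i(\S)$ are the images under the quotient maps, the induced maps $\M\to M_i$ assemble (by the universal property of the projective limit) into a map $\M\to\li M_i$; I would verify this is an isomorphism using that $\M=\li M_i$ already holds at the level of the ambient group — concretely, for $\M=\Z^\omega$ the projections $\pi_i$ realize $\Z^\omega=\li\Z^i$ and $S_i$ generates $\Z^i$ because $\Z\S=\Z^\omega$, while the case $\M=\Z^k$ is the eventually-constant (trivial filtration) situation.

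In writing this up I would phrase both directions through Proposition~\ref{ind-torus-characters} and Theorem~\ref{pro-affine-embedding} so that the only genuinely new content is the observation that the character lattice $M_i$ of the torus $T_i$ coincides with $\Z S_i$, which is the classical finite-dimensional statement, and that the filtered-semigroup embedding of Theorem~\ref{pro-affine-embedding} is compatible with the projective system of character lattices. The essential technical care, as noted, lies in not asserting $\Z\li S_i=\li\Z S_i$ blindly but instead deducing $\Z\S(\V)=\M$ from the splitness of the system $\{\Z S_i\}$ established in Theorem~\ref{pro-affine-embedding}; everything else is a routine compatibility check with the universal properties of the limits.
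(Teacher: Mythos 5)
Your route mirrors the paper's own proof: reduce to the classical facts that $S_i$ embeds in $M_i$ with $\Z S_i=M_i$, identify $\M=\li M_i$ via Proposition~\ref{ind-torus-characters}, and pass to limits. Your argument for the converse direction is correct and is essentially what the paper does: there $\Z\S=\M$ is a \emph{hypothesis}, so $\Z S_i=\Z\,\pi_i(\S)=\pi_i(\Z\S)=\Z^i=M_i$, and the character lattice of the acting ind-torus is $\li M_i=\li \Z^i=\Z^\omega=\M$.

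The genuine gap is in the first direction, at precisely the step you single out as delicate. You rightly note that forming the group envelope does not commute with projective limits, but your fix --- invoking Theorem~\ref{pro-affine-embedding} on the grounds that ``the construction there simultaneously realizes $\li S_i=\S(\V)$ as a filtered subsemigroup of $\M$ generating it as a group'' --- does not close it: the splitness argument in that proof only identifies the ambient group $\li \Z S_i$ with $\Z^\omega$; it says nothing about whether differences of elements of $\li S_i$ exhaust $\li \Z S_i$, and the ``Moreover'' clause of Theorem~\ref{pro-affine-embedding} that you appeal to is itself asserted there without argument, so the appeal is circular. Worse, the step can actually fail. Take $S_i=\{m\in\Z^i \mid 0\leq m_j\leq m_1 \mbox{ for all } j\leq i\}$: each $S_i$ is an affine semigroup (Gordan's lemma) with $\Z S_i=\Z^i$, the coordinate projections $S_{i+1}\rightarrow S_i$ are surjective, and the dual closed embeddings form a toric filtration of an affine toric ind-variety $\V$ with $\M=\li\Z^i=\Z^\omega$ by Theorem~\ref{toric-filtration} and Proposition~\ref{ind-torus-characters}. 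But every $m\in\S(\V)=\li S_i$ satisfies $0\leq m_j\leq m_1$ for all $j$, so every element of $\Z\S(\V)$ is a bounded sequence, and the unbounded element $(0,1,2,3,\dots)\in\M$ does not lie in $\Z\S(\V)$; hence $\Z\S(\V)\subsetneq\M$. What the limit argument actually gives is that $\Z\S(\V)$ surjects onto each $M_i$, i.e., that $\Z\S(\V)$ is \emph{dense} in $\M$, not equal to it. In fairness, the paper's own proof is silent at the same spot (it reads ``the first assertion now follows''), so the defect lies in the statement itself rather than in your strategy alone; but as written your proposal does not --- and cannot --- establish the first claim, and Theorem~\ref{pro-affine-embedding} cannot be used as a black box to do so.
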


\begin{proof}
  The case where $\M\simeq \Z^k$ corresponds to the classical case of affine toric varieties. Hence, we will only deal with the case where $\M\simeq \Z^\omega$. Assume first that $\V$ is an affine toric ind-variety. With the above notation, by the classical case we have that each $S_i$ is naturally embedded in the character lattice $M_i$ of the algebraic torus $T_i$ acting on $V_i$ with $M_i=\Z S_i$. By Theorem~\ref{toric-filtration}, we have that $\T$ equals the inductive limit $\ld T_i$. Furthermore, by Proposition~\ref{ind-torus-characters} we have that $\M$ equals $\li M_i$. The first assertion now follows. On the other hand, given $\S$ embedded in $\M\simeq \Z^\omega$, we let $M_i$ be the character lattice of the torus $T_i$ acting on $V_i$. By the classical finite dimensional case of the lemma, we have $\Z S_i=M_i$. The result now follows again from Proposition~\ref{ind-torus-characters}.
\end{proof}

We come now to morphisms in both categories. Let first $\S$ and $\S'$ be pro-affine semigroups and let $\beta\colon\S\rightarrow \S'$ be a semigroup homomorphism. By Proposition~\ref{homo-morphism-prop} the pro-affine semigroups $\S$ and $\S'$ admit filtrations of equivalence relations $R_1\supset R_2\supset\dots$ and $R'_1\supset R'_2\supset\dots$, respectively, such that $\beta$ is a morphism of filtered semigroups with respect to these filtrations. We let $\V=\V(\S)$ and $\V'=\V(\S')$ be the corresponding affine toric ind-varieties defined above with the toric filtrations $V_1\hookrightarrow V_2\hookrightarrow\dots$ and $V'_1\hookrightarrow V'_2\hookrightarrow\dots$, respectively, where $V_i=\V(S_i)$, $V'_i=\V(S'_i)$ and the closed embeddings are $\V(\varphi_i)$ and $\V(\varphi'_i)$, respectively. We define a homorphism $\C[\S]\rightarrow\C[\S']$ of semigroup algebras by $\chi^m\mapsto \chi^{\beta(m)}$, for all $m\in \S$. By abuse of notation, we denote this map also by $\beta\colon \C[\S]\rightarrow\C[\S']$.

\begin{lemma}
  The homomorphism $\beta\colon \C[\S]\rightarrow\C[\S']$ is a continuous homomorphism of topological algebras and so we can extend $\beta$ to an unique continuous homomorphism $\V(\beta)^*\colon \C[\V]\rightarrow\C[\V']$ whose comorphism defines a toric morphism of affine toric ind-varieties $\V(\beta)\colon \V'\rightarrow \V$.
\end{lemma}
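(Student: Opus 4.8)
The plan is to establish the statement in three stages: first prove continuity of $\beta\colon\C[\S]\rightarrow\C[\S']$ with respect to the $I_i$-adic filtrations, then invoke completeness to extend $\beta$ to the completions, and finally identify the completions with $\C[\V]$ and $\C[\V']$ via Proposition~\ref{equivalence-object} and check that the resulting comorphism is toric. The key observation driving continuity is that $\beta\colon\S\rightarrow\S'$ is a morphism of filtered semigroups, so for every $i>0$ there exists $j>0$ with $(\beta\times\beta)(R_j)\subset R'_i$. By Lemma~\ref{induce-semigroup-morph} this yields a compatible homomorphism $\beta_{ij}\colon S_j\rightarrow S'_i$ of the quotient affine semigroups. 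Translating to semigroup algebras, I would show that $\beta$ carries the ideal $I_j=\ker\pi_j\subset\C[\S]$ into $I'_i=\ker\pi'_i\subset\C[\S']$: indeed an element of $I_j$ is a $\C$-linear combination of differences $\chi^m-\chi^{m'}$ with $(m,m')\in R_j$ (together with elements vanishing under $\pi_j$), and $(\beta\times\beta)(R_j)\subset R'_i$ forces $\beta(\chi^m-\chi^{m'})=\chi^{\beta(m)}-\chi^{\beta(m')}\in I'_i$. This gives $\beta(I_j)\subset I'_i$, which is exactly continuity of $\beta$ for the topologies defining the pro-affine algebra structures.

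Second, I would extend $\beta$ to the completions. Since $\C[\V]$ is the completion of $\C[\S]$ with respect to $I_1\supset I_2\supset\cdots$ and likewise $\C[\V']$ is the completion of $\C[\S']$ (both by the discussion preceding Proposition~\ref{equivalence-object}, appealing to \cite[Chapter~9, Theorem~10]{N68}), a uniformly continuous homomorphism between the two filtered algebras extends uniquely to a continuous homomorphism of the completions. Concretely, the inclusions $\beta(I_j)\subset I'_i$ induce compatible maps $\C[\S]/I_j\rightarrow\C[\S']/I'_i$ on the quotients, and passing to projective limits produces the unique continuous extension $\V(\beta)^*\colon\C[\V]\rightarrow\C[\V']$. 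Uniqueness is automatic because $\C[\S]$ is dense in its completion and the target is Hausdorff.

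Third, I would verify that the induced morphism of affine ind-varieties is toric. By the equivalence of categories between pro-affine algebras and affine ind-varieties recalled at the end of Section~\ref{sec:ind-def}, the continuous homomorphism $\V(\beta)^*$ has a comorphism $\V(\beta)\colon\V'\rightarrow\V$ that is a morphism of ind-varieties. To see it is toric, I would use that on each finite level $\beta_{ij}\colon S_j\rightarrow S'_i$ is a homomorphism of affine semigroups, so its classical dual $\V(\beta_{ij})\colon V'_i\rightarrow V_j$ is a toric morphism in the ordinary sense, restricting to a homomorphism of the acting tori $T'_i\rightarrow T_j$. Passing to the limit, these assemble into a morphism of the acting ind-tori $\T_{\V'}\rightarrow\T_{\V}$ carrying $\T_{\V'}$ into $\T_{\V}$ and restricting to an ind-group homomorphism, which is precisely the condition in Definition~\ref{def:ind-toric-morphism}; alternatively one can invoke Proposition~\ref{toric-morphism-filtration}, checking toricity level by level.

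The main obstacle I anticipate is the bookkeeping in the continuity step, specifically pinning down the precise description of the kernels $I_j=\ker\pi_j$ as ideals generated by the binomial differences $\chi^m-\chi^{m'}$ for $(m,m')\in R_j$, and confirming that $\beta$ respects this binomial structure so that $\beta(I_j)\subset I'_i$ genuinely holds rather than merely $\beta(I_j)\subset I'_{i'}$ for some weaker index. Because projective limits do not commute with direct sums (as the authors emphasize, see Example~\ref{ex-referee}), one must argue entirely at the level of the filtered algebras $\C[\S]$ and $\C[\S']$ and their completions, never assuming $\C[\V]\cong\C[\S]$. Once the inclusion $\beta(I_j)\subset I'_i$ is secured, the extension to completions and the toricity of the comorphism are both formal consequences of the machinery already developed.
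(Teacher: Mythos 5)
Your proposal is correct and follows essentially the same route as the paper: continuity via $(\beta\times\beta)(R_j)\subset R'_i$ implying $\beta(I_j)\subset I'_i$, extension to the completion by density of $\C[\S]$ in $\C[\V]$ (Proposition~\ref{equivalence-object}, via Northcott), and toricity via Proposition~\ref{toric-morphism-filtration}. The only cosmetic difference lies in how $\beta(I_j)\subset I'_i$ is verified: you use the (correct and standard) fact that $I_j$ is spanned by the binomials $\chi^m-\chi^{m'}$ with $(m,m')\in R_j$, whereas the paper computes $\pi'_i(\beta(f))=0$ directly from the factorization $\pi'_i\circ\beta=\beta_{ij}\circ\pi_j$ supplied by Lemma~\ref{induce-semigroup-morph}; both arguments are equally valid.
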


\begin{proof}
  To prove that $\beta\colon \C[\S]\rightarrow\C[\S']$ is continuous we have to prove that for all $i>0$ there exists $j>0$ such that $\beta(I_j)\subset I'_i$. Here $I_{j}=\ker \pi_{j}$ and $\pi_j$ is the projection $\pi_j\colon\C[\S]\rightarrow \C[S_j]$ induced by $\S\rightarrow S_j$, for all $j>0$ and similarly $I'_{i}=\ker \pi'_{i}$ and $\pi'_i$ is the projection $\pi'_i\colon\C[\S']\rightarrow \C[S'_i]$ induced by $\S'\rightarrow S'_i$, for all $i>0$.

  Let $i>0$ be an integer. By the definition of morphism of filtered semigroup, there exists $j>0$ such that $(\beta\times\beta)(R_j)\subset R'_i$. Let $f=\sum a_m\chi^m$ be an element in $I_j$ where the sum is finite. Belonging to $I_j$ is equivalent to $\pi_j(f)=\sum a_m\chi^{\pi_{j}(m)}=0$. On the other hand, $\pi'_{i}(\beta(f))=\sum a_m\chi^{(\pi'_{i}\circ\beta)(m)}$. By Lemma~\ref{induce-semigroup-morph}, the homomorphism $\beta$ induces a homomorphism $\beta_{ij}\colon S_j\rightarrow S'_i$ and we have $\pi'_{i}\circ\beta=\beta_{ij}\circ \pi_{j}$ so we have $\pi'_{i}(\beta(f))=\sum a_m\chi^{(\beta_{ij}\circ \pi_{j})(m)}=\beta_{ij}\big(\sum a_m\chi^{\pi_{j}(m)}\big)=0$. We conclude that $\beta(I_j)\subset I'_i$ and so $\beta\colon \C[\S]\rightarrow\C[\S']$ is continuous.

  Finally, the algebra $\C[\S]$ is dense in $\C[\V]$ by the second statement of Proposition~\ref{equivalence-object}. Hence, the homomorphism $\beta$ can be extended to a continuous homomorphism $\V(\beta)^*\colon \C[\V]\rightarrow\C[\V']$ as required, see \cite[Ch.9, Th. 5]{N68}.  Moreover, by Proposition~\ref{toric-morphism-filtration}, the morphism $\V(\beta)\colon \V'\rightarrow \V$ is toric.
\end{proof}

Let now $\alpha\colon\V\rightarrow \V'$ be a toric morphism of affine toric ind-varieties and let $\S=\S(\V)$ and $\S'=\S(\V')$ be the corresponding pro-affine semigroups. By Lemma~\ref{S-span-M} we have that $\S$ and $\S'$ are naturally embedded in $\M$ and $\M'$, respectively. In particular, we have that $\alpha|_{\T_\V}\colon \T_\V\rightarrow \T_{\V'}$ is a homomorphism of ind-groups and so by Lemma~\ref{morph-tori} the comorphism $(\alpha|_{\T_{\V}})^*$ induces a semigroup homomorphism $\alpha^\vee\colon \M'\rightarrow \M$ on the character lattices via $(\alpha|_{\T_{\V}})^*(\chi^m)=\chi^{\alpha^\vee(m)}$. Furthermore, given $m\in \S'$, the regular function $\chi^m\in \C[\V']$ is mapped to the regular function $\chi^{\alpha^\vee(m)}\in \C[\V]$. This yields $\alpha^\vee(m)\in \S$, for all $m\in \S'$. Hence $\alpha^\vee$ restricts to a homomorphism  $\S'\rightarrow \S$. We denote this homomorphism by $\S(\alpha)$.

In the following proposition, we summarize the considerations above.

\begin{proposition} \label{equivalence-morphism} %
  let $\S$ and $\S'$ be pro-affine semigroups. Then, for every homomorphism $\beta\colon\S\rightarrow \S'$ the map  $\V(\beta)\colon\V(\S')\rightarrow \V(\S)$ is a toric morphism of affine toric ind-varieties. Moreover, for every toric morphism $\alpha\colon\V(\S')\rightarrow \V(\S)$ there exists a unique homomorphism $\beta\colon \S\rightarrow \S'$ such that $\alpha=\V(\beta)$. In particular, for every pair of pro-affine semigroups $\S$ and $\S'$ there is a bijection between semigroup homomorphisms $\S\rightarrow \S'$ and toric morphisms $\V(S')\rightarrow \V(\S)$.
\end{proposition}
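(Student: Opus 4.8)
The plan is to prove Proposition~\ref{equivalence-morphism} in three movements, establishing the two assignments on morphisms and then that they are mutually inverse bijections. Most of the hard analytic work has already been isolated in the preceding lemma and in Proposition~\ref{equivalence-object}, so the present proof is largely a matter of carefully tracking the correspondence through character lattices and verifying that no information is lost. First I would observe that the assignment $\beta\mapsto \V(\beta)$ produces a toric morphism: this is precisely the content of the lemma immediately preceding this proposition, so the first sentence of the statement requires no additional argument beyond a citation to that lemma.

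Next I would handle the reverse assignment $\alpha\mapsto \S(\alpha)$, which was constructed in the paragraph just before the proposition. The key point to verify is that $\S(\alpha)$ is a genuine semigroup homomorphism $\S\rightarrow \S'$ and that it recovers $\alpha$ under $\V(\bigcdot)$. By Lemma~\ref{S-span-M}, the semigroups $\S$ and $\S'$ sit naturally inside their character lattices $\M$ and $\M'$ with $\Z\S=\M$ and $\Z\S'=\M'$. Since $\alpha|_{\T_{\V}}$ is an ind-group homomorphism, Lemma~\ref{morph-tori} furnishes the comorphism $\alpha^\vee\colon\M'\rightarrow\M$, and the argument in the paragraph above shows that $\alpha^\vee(\S')\subset\S$ because characters that extend to regular functions on $\V'$ pull back to regular functions on $\V$. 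The remaining task is to show $\V(\S(\alpha))=\alpha$: both are toric morphisms $\V(\S')\rightarrow\V(\S)$, and two toric morphisms agreeing on the open dense ind-torus must coincide, since by Theorem~\ref{toric-filtration} they agree on the dense subset $\T_{\V}\subset\V$ and each is continuous for the ind-topology. Here I would invoke that a toric morphism is determined by its restriction to the acting ind-torus, together with the fact that $\T_{\V}$ is dense (the closure of $(\CT)^j$ is $W_j$ by the construction in Theorem~\ref{toric-filtration}).

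For the uniqueness clause---given $\alpha$, there is a \emph{unique} $\beta$ with $\alpha=\V(\beta)$---I would argue that any two such $\beta$ induce the same comorphism on character lattices, hence agree on $\S$. Concretely, if $\V(\beta)=\V(\beta')$ then the induced continuous algebra homomorphisms $\C[\V]\rightarrow\C[\V']$ coincide, and restricting to the dense subalgebra $\C[\S]$ (dense by Proposition~\ref{equivalence-object}) forces $\chi^{\beta(m)}=\chi^{\beta'(m)}$ for all $m\in\S$, whence $\beta=\beta'$. Combining the two directions, the maps $\beta\mapsto\V(\beta)$ and $\alpha\mapsto\S(\alpha)$ are mutually inverse, which gives the claimed bijection between semigroup homomorphisms $\S\rightarrow\S'$ and toric morphisms $\V(\S')\rightarrow\V(\S)$.

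The step I expect to be the main obstacle is establishing that $\V(\S(\alpha))=\alpha$, that is, that the round trip from a toric morphism through its character-lattice comorphism and back reproduces the original map. The subtlety is that $\C[\V]$ is only the completion of $\C[\S]$ and need not equal the semigroup algebra (as the authors warn before Proposition~\ref{equivalence-object} and illustrate in their Example~\ref{ex-referee}), so one cannot directly read off $\alpha^*$ from its values on characters---one must use density and continuity to extend the identification from $\C[\S]$ to all of $\C[\V]$. I would handle this by noting that $\V(\beta)^*$ and $\alpha^*$ are both continuous homomorphisms $\C[\V]\rightarrow\C[\V']$ that agree on the dense subalgebra $\C[\S]$, and a continuous map into a Hausdorff space is determined by its values on a dense set; the Hausdorff and completeness properties supplied by Proposition~\ref{proj-complete} make this rigorous.
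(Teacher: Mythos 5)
Your proposal is correct and takes essentially the same route as the paper, which in fact offers no separate proof: the proposition is explicitly a summary of the immediately preceding lemma (giving that $\beta\mapsto\V(\beta)$ lands on toric morphisms) and the preceding paragraph (constructing $\S(\alpha)$ from $\alpha|_{\T_{\V}}$ via Lemma~\ref{S-span-M} and Lemma~\ref{morph-tori}), and your round-trip verifications $\V(\S(\alpha))=\alpha$ and $\S(\V(\beta))=\beta$ fill in exactly what the paper leaves implicit. One small caution: your geometric step ``continuous maps agreeing on the dense ind-torus coincide'' is not a purely topological fact (the ind-topology is not Hausdorff), but should instead be justified on each stratum of a toric filtration by the separatedness and reducedness of varieties; your alternative algebraic argument---$\alpha^*$ and $\V(\S(\alpha))^*$ are continuous homomorphisms agreeing on $\C[\S]$, which is dense in the Hausdorff completion $\C[\V]$---is the cleaner mechanism and is fully correct as written.
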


The assignment $\V(\bigcdot)$ is a contravariant functor, i.e., $\V(\operatorname{id})=\operatorname{id}$ and $\V(\beta'\circ\beta)= \V(\beta)\circ \V(\beta')$, for every pair of semigroup homomorphisms $\beta\colon\S\rightarrow\S'$ and $\beta'\colon\S'\rightarrow\S''$, where $\S$, $\S'$ and $\S''$ are pro-affine semigroups. This follows directly from the definition of $\V(\beta)$ as the comorphism of the unique extension of the morphism $\C[\S]\rightarrow \C[\S']$ given by $\chi^m\mapsto \chi^{\beta(m)}$.

On the other hand, the assignment $\S(\bigcdot)$ is also a contravariant functor. Indeed, let $\alpha'\colon\V''\rightarrow \V'$ and $\alpha\colon\V'\rightarrow \V$ be morphisms of affine toric ind-varieties $\V$, $\V'$ and $\V''$. By Proposition~\ref{equivalence-object} and Proposition~\ref{equivalence-morphism}, there exist pro-affine semigroups $\S$, $\S'$, $\S''$ such that $\V=\V(\S)$, $\V'=\V(\S')$ and $\V''=\V(\S'')$ with morphisms $\beta\colon \S\rightarrow \S'$ and $\beta'\colon\S'\rightarrow \S''$ such that $\beta=\S(\alpha)$ and $\beta'=\S(\alpha')$. By Proposition~\ref{equivalence-morphism}, we have $\V(\beta'\circ\beta)=\alpha\circ\alpha'$ or, equivalently,  $\beta'\circ\beta=\S(\alpha\circ\alpha')$ so that $\S(\alpha')\circ\S(\alpha)=\S(\alpha\circ\alpha')$.

In the following theorem, that is our main result, we summarize the results in this section.

\begin{theorem} \label{main-theorem} \
  \begin{enumerate}
  \item The assignment $\V(\bigcdot)$ is a contravariant functor from the category of pro-affine semigroups with homomorphisms of semigroups to the category of affine toric ind-varieties with toric morphisms.
  \item The assignment $\S(\bigcdot)$ is a contravariant functor from the category of affine toric ind-varieties  with toric morphisms to the category of pro-affine semigroups  with homomorphisms of semigroups.
  \item The pair $(\V(\bigcdot),\S(\bigcdot))$ is a duality between the categories of affine toric ind-varieties and pro-affine semigroups.
  \end{enumerate}
\end{theorem}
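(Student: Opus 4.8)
The plan is to assemble the theorem from the results that precede it, since all three assertions are now essentially bookkeeping. Parts (1) and (2) are exactly the two functoriality verifications carried out in the paragraphs before the statement: for $\V(\bigcdot)$ one has $\V(\operatorname{id})=\operatorname{id}$ and $\V(\beta'\circ\beta)=\V(\beta)\circ\V(\beta')$ for composable semigroup homomorphisms, and dually $\S(\bigcdot)$ satisfies $\S(\alpha')\circ\S(\alpha)=\S(\alpha\circ\alpha')$ for composable toric morphisms. I would simply record that these computations show $\V(\bigcdot)$ and $\S(\bigcdot)$ are contravariant functors between the two categories, which is precisely (1) and (2).

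For part (3) I would verify directly that $(\V(\bigcdot),\S(\bigcdot))$ is a pair of mutually quasi-inverse contravariant functors, that is, exhibit natural isomorphisms $\operatorname{id}\Rightarrow\S(\bigcdot)\circ\V(\bigcdot)$ and $\operatorname{id}\Rightarrow\V(\bigcdot)\circ\S(\bigcdot)$. The components are supplied by Proposition~\ref{equivalence-object}: for a pro-affine semigroup $\S$ there is a canonical isomorphism $\eta_\S\colon\S\to\S(\V(\S))$, and for an affine toric ind-variety $\V$ a canonical isomorphism $\epsilon_\V\colon\V\to\V(\S(\V))$. Equivalently, one may phrase (3) through the criterion that a contravariant functor which is fully faithful and essentially surjective is a duality: essential surjectivity of $\V(\bigcdot)$ is the object part of Proposition~\ref{equivalence-object} (every $\V$ is isomorphic to $\V(\S(\V))$), and full faithfulness is the bijection of Proposition~\ref{equivalence-morphism} between $\operatorname{Hom}(\S,\S')$ and the set of toric morphisms $\V(\S')\to\V(\S)$ given by $\beta\mapsto\V(\beta)$; the explicit quasi-inverse is then $\S(\bigcdot)$, which is fully faithful as well since the same correspondence runs in both directions.

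The only point requiring genuine care, and the one I expect to be the main obstacle, is naturality of $\eta$ and $\epsilon$: one must check that these object isomorphisms cohere with morphisms rather than forming an unrelated family of object and arrow correspondences. I would reduce this to the classical finite-dimensional duality via the concrete projective-limit descriptions. Writing $S_i=\S/R_i$, one has $\S(\V(\S))=\li\S(\V(S_i))$, and the classical duality identifies each $\S(\V(S_i))$ naturally with $S_i$; since $\S$ is Hausdorff and complete, Remark~\ref{limit-equal-complete} gives $\li S_i=\S$, so $\eta_\S$ is the canonical identification. Under this identification $\S(\V(\beta))$ reduces levelwise to the induced homomorphisms $\beta_{ij}\colon S_j\to S'_i$, whose limit is $\beta$ itself, so the naturality square for $\eta$ commutes on the nose; the square for $\epsilon$ follows symmetrically. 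Alternatively, naturality can be obtained abstractly from the uniqueness clause of Proposition~\ref{equivalence-morphism}: since $\V(\bigcdot)$ is faithful, two parallel homomorphisms of pro-affine semigroups agree as soon as their images under $\V(\bigcdot)$ agree, and the required equality of toric morphisms is forced by functoriality together with $\eta,\epsilon$ being the unit and counit. With naturality established, $\V(\bigcdot)$ and $\S(\bigcdot)$ form the asserted duality, completing the proof.
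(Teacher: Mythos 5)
Your proposal is correct and follows essentially the same route as the paper, which proves the theorem simply by assembling the preceding results: the functoriality computations for $\V(\bigcdot)$ and $\S(\bigcdot)$ give parts (1) and (2), and Proposition~\ref{equivalence-object} together with Proposition~\ref{equivalence-morphism} gives part (3). Your additional verification of naturality of the isomorphisms via the levelwise reduction to the classical finite-dimensional duality is a point the paper leaves implicit, so your write-up is in fact slightly more complete than the paper's own.
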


A well-known feature of the classical duality between affine toric varieties and affine semigroups is the correspondence between points on the toric variety and semigroup homomorphism to $(\C,\cdot)$. In the following proposition, we generalize this result to the case of affine toric ind-varieties. 

Recall that  a semigroup $\S$ has the cancellation property if $m+m'=m+m''$ implies $m'=m''$, with $m,m',m''\in \S$. Let $(\C,\cdot)$ be the semigroup of complex numbers under multiplication. This semigroup is not pro-affine since it does not have the cancellation property and all pro-affine semigroup inherit the cancellation property from the embedding in $\Z^\omega$ shown in Corollary~\ref{other-def-pro-affine}. 

We endow $(\C,\cdot)$ with the trivial descending filtration $R'_1\supset R'_2\supset\dots$ of compatible equivalence relations $R'_i=\{(t,t)\in \C\times \C\mid t\in\C\}$ so that $\C/R_i\simeq \C$. Unlike the case of pro-affine semigroups, not every semigroup homomorphism $\S\rightarrow (\C,\cdot)$ is a filtered morphism. See \cite[page 159]{F73} and apply the fact that $(\C,\cdot)$ contains an isomorphic copy $Q$ of the additive group of the rational numbers. For instance, we can take $Q=\{\exp(q)\mid\ q\in\QQ \}$ where $\exp\colon \CC\rightarrow \CC^*$ is the usual exponential map.

\begin{proposition}
  Let $\mathcal{V}$ be an affine toric ind-variety and let $\S=\S(\V)$. Then there are bijective correspondence between the following:
  \begin{enumerate}
  \item Points $v$ in $\mathcal{V}$.
  \item Closed maximal ideals $\mathfrak{m}$ in $\C[\mathcal{V}]$ that is equal to the completion of $\C[\S]$.
  \item Morphisms of filtered semigroups $\Lambda\colon\mathcal{S}\rightarrow (\C,\cdot)$.
  \end{enumerate}
\end{proposition}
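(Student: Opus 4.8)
The plan is to establish the two equivalences $(1)\Leftrightarrow(2)$ and $(1)\Leftrightarrow(3)$ separately, in each case reducing to the classical finite-dimensional correspondence between points of the affine toric variety $V_i=\V(S_i)$, maximal ideals of $\C[V_i]=\C[S_i]$, and semigroup homomorphisms $S_i\to(\C,\cdot)$. The unifying idea is that a \emph{closed} maximal ideal and a \emph{filtered} semigroup morphism are precisely the objects that are detected at a finite stage of the filtration, which is exactly what a point $v\in\V=\bigcup V_i$ is. First I would record the preliminary facts: since each $\varphi_i\colon V_i\hookrightarrow V_{i+1}$ is a closed embedding, the comorphisms $\varphi_i^*\colon\C[V_{i+1}]\to\C[V_i]$ are surjective, whence the projections $\pi_i\colon\C[\V]=\li\C[V_i]\to\C[V_i]$ are surjective and induce isomorphisms $\C[\V]/I_i\cong\C[V_i]$ with $I_i=\ker\pi_i$. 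By definition of the projective-limit topology the family $\{I_i\}_{i>0}$ is a neighborhood base of $0$, so an ideal of $\C[\V]$ is open if and only if it contains some $I_i$, and since open subgroups are closed, every such ideal is also closed.

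For $(1)\Leftrightarrow(2)$ the heart is to show that a maximal ideal $\mathfrak{m}\subset\C[\V]$ is closed if and only if it is open. One direction was just noted. For the converse I would argue by contradiction: if $\mathfrak{m}$ is closed but not open, then for each $i$ the ideal $\pi_i(\mathfrak{m})\subset\C[V_i]$ cannot be proper, for otherwise $\pi_i^{-1}(\pi_i(\mathfrak{m}))$ would be a proper ideal containing both $\mathfrak{m}$ and $I_i$, so by maximality $\mathfrak{m}=\pi_i^{-1}(\pi_i(\mathfrak{m}))\supset I_i$, contradicting non-openness. Hence $\pi_i(\mathfrak{m})=\C[V_i]$, i.e. $\mathfrak{m}+I_i=\C[\V]$ for all $i$, so $\overline{\mathfrak{m}}=\bigcap_i(\mathfrak{m}+I_i)=\C[\V]$, contradicting that $\mathfrak{m}$ is proper and closed. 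Thus every closed maximal ideal contains some $I_i$ and corresponds, through $\C[\V]/I_i\cong\C[V_i]$ and the Nullstellensatz, to a point $v\in V_i\subset\V$; conversely a point $v\in V_i$ yields the closed maximal ideal $\pi_i^{-1}(\mathfrak{m}_{V_i,v})$. Using $\pi_i=\varphi_i^*\circ\pi_{i+1}$ together with $(\varphi_i^*)^{-1}(\mathfrak{m}_{V_i,v})=\mathfrak{m}_{V_{i+1},v}$, I would check this assignment is independent of the stage $i$ with $v\in V_i$, giving a well-defined bijection.

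For $(1)\Leftrightarrow(3)$ I would first unwind the definition: because $(\C,\cdot)$ carries the trivial filtration, a morphism of filtered semigroups $\Lambda\colon\S\to(\C,\cdot)$ is exactly a semigroup homomorphism that is constant on the classes of some $R_j$, hence one that factors as $\Lambda=\lambda_j\circ\overline{\pi}_j$ for a homomorphism $\lambda_j\colon S_j\to(\C,\cdot)$, where $\overline{\pi}_j\colon\S\to S_j$. By the classical correspondence such a $\lambda_j$ is the same datum as a $\C$-algebra homomorphism $\C[S_j]=\C[V_j]\to\C$, that is, a point $v\in V_j\subset\V$; conversely $v\in V_j$ gives $\lambda_j\colon m\mapsto\chi^m(v)$ and the filtered morphism $\Lambda=\lambda_j\circ\overline{\pi}_j$, independent of $j$ by compatibility of the tower. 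Finally I would reconcile this with $(1)\Leftrightarrow(2)$ by restricting the character $\chi\colon\C[\V]\to\C$ associated with a closed maximal ideal to the dense subalgebra $\C[\S]$ (Proposition~\ref{equivalence-object}), under which $\Lambda(m)=\chi(\chi^m)$.

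The hard part will be the implication closed $\Rightarrow$ open for maximal ideals, i.e. ruling out dense proper maximal ideals. This is precisely the subtlety flagged before the statement: there exist non-filtered semigroup homomorphisms $\S\to(\C,\cdot)$ arising from a copy of $(\Q,+)$ inside $(\C^*,\cdot)$, and these correspond exactly to the dense (non-closed) maximal ideals. The maximality-plus-density dichotomy above is the mechanism that isolates the genuine points from these pathological characters, and getting it cleanly is where the real work lies; the remaining verifications of well-definedness and compatibility across stages are routine.
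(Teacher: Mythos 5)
Your proposal is correct, but it takes a more self-contained route than the paper. The paper outsources the equivalence $(1)\Leftrightarrow(2)$ to Kambayashi \cite{Kam03}, and in particular uses \cite[Proposition~1.2.2]{Kam03} --- closed maximal ideals of a pro-affine algebra are exactly the preimages $\widehat{\pi}_j^{-1}(\overline{\mathfrak{m}})$ of maximal ideals of $\C[S_j]$ at some finite stage $j$ --- as a black box; it then proves $(2)\Leftrightarrow(3)$ by factoring a filtered morphism $\Lambda$ through some quotient $S_j$ (Lemma~\ref{induce-semigroup-morph}) and applying the classical correspondence \cite[Proposition~1.3.1]{CLS11}. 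You instead reprove the Kambayashi ingredient from scratch: your dichotomy argument (for a maximal ideal $\mathfrak{m}$ and each $i$, either $\mathfrak{m}\supset I_i$, making $\mathfrak{m}$ open and hence closed, or $\mathfrak{m}+I_i=\C[\V]$; if the latter holds for all $i$, then the closure of $\mathfrak{m}$ equals $\bigcap_i(\mathfrak{m}+I_i)=\C[\V]$, so $\mathfrak{m}$ is dense) is precisely the content of the cited proposition, and it is sound --- it relies on the surjectivity of the projections $\pi_i$, which you correctly deduce from the surjectivity of the transition maps $\varphi_i^*$ for a system indexed by the positive integers. Your handling of $(3)$, factoring $\Lambda$ through a finite stage and invoking the classical point--homomorphism correspondence for $V_j$, is in substance the same argument as the paper's $(2)\Leftrightarrow(3)$, merely pivoted on points rather than on closed maximal ideals. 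In short, both proofs rest on the same principle that closedness of a maximal ideal and filteredness of a semigroup morphism each mean ``detected at a finite stage of the filtration''; the paper's proof is shorter by citation, whereas yours makes the topological mechanism explicit and is elementary. One caveat: your closing aside that non-filtered homomorphisms $\S\rightarrow(\C,\cdot)$ ``correspond exactly to the dense (non-closed) maximal ideals'' is not justified as stated --- such a homomorphism yields a discontinuous character of $\C[\S]$, hence a dense maximal ideal of $\C[\S]$, but it need not extend to a maximal ideal of the completion $\C[\V]$. Since nothing in your argument depends on this remark, it is harmless, but it should be removed or qualified.
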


\begin{proof}
  The equivalence of $(1)$ and $(2)$ is general for ind-varieties and was first proven in \cite{Kam03}. Let $R_1\supset R_2\supset\dots$ be the filtration of compatible equivalence relation in $\S$ and let $\Lambda\colon\mathcal{S}\rightarrow \C$ be a filtered semigroup morphism. By the definition of filtered semigroups, there exists $j>0$ such that $(\Lambda\times\Lambda)(R_j)$ is contained in the diagonal in $\C\times\C$ defining the trivial equivalence relation in $\C$. By Lemma~\ref{induce-semigroup-morph}, the morphism $\Lambda$ induces a semigroup homomorphisms $\Lambda_j\colon S_j\rightarrow \C$, where $S_j=\S/R_j$. The homomorphism $\Lambda_j\colon S_j\rightarrow \C$ induces a surjective $\C$-algebra homomorphism $\overline{\Lambda}_j\colon\C[S_j]\rightarrow \C$ given by $\chi^m\mapsto \Lambda_j(m)$. Since $\C$ is a field, we have $\overline{\mathfrak{m}}=\ker \overline{\Lambda}_j$ is a maximal ideal. The preimage $\mathfrak{m}$ of $\overline{\mathfrak{m}}$ by the homomorphism $\widehat{\pi}_j\colon\C[\V]\rightarrow \C[S_j]$ coming from the projective system $\C[S_1]\leftarrow\C[S_2]\leftarrow\dots$ is also maximal. By \cite[Proposition 1.2.2]{Kam03} we have that $\mathfrak{m}$ is closed since $\widehat{I}_j=\ker\widehat{\pi}_j$ is subset of $\mathfrak{m}$.

  On the other hand, let $\mathfrak{m}$ be a closed maximal ideal in $\C[\V]$. By \cite[Proposition 1.2.2]{Kam03}, there exist $j>0$ and $\overline{\mathfrak{m}}$ a maximal ideal of $\C[S_j]$ such that $\mathfrak{m}$ is the preimage of $\overline{\mathfrak{m}}$ by $\widehat{\pi}_j$. This maximal ideal $\overline{\mathfrak{m}}$ defines an algebra homomorphisms $\overline{\Lambda}_j\colon\C[S_j]\rightarrow \C\simeq \C[S_j]/\overline{\mathfrak{m}}$.  By \cite[proposition 1.3.1]{CLS11}, this algebra homomorphism defines a semigroup homomorphism $\Lambda_j\colon S_j\rightarrow \C$ given by $\Lambda_j(m)= \overline{\Lambda}_j(\chi^m)$.  We define $\Lambda\colon \S\rightarrow \C$ by $\Lambda=\Lambda_j\circ \pi_j$, where $\pi_j\colon \S\rightarrow S_j$ is the quotient morphism. The semigroup homomorphism $\Lambda$ is a filtered semigroup morphism since $(\Lambda \times \Lambda)(R_j)$ is contained in the diagonal in $\C\times\C$ defining the trivial equivalence relation in $\C$. It is a straightforward verification that both this constructions provide the required bijection.
\end{proof}

\section{Examples} \label{sec:examples}

To conclude the paper, we provide the following three examples of affine toric ind-varieties.

\begin{example}
The ind-torus $\T=(\CC^*)^\infty$ is a toric ind-variety. Furthermore, since the algebra of regular functions of $(\CC^*)^i$ is $\CC[\ZZ^i]$ we obtain that $\S(\T)=\ZZ^\omega$ by Example~\ref{split-system}.
\end{example}

\begin{example} \label{ex-referee}
The infinite dimensional affine space  $\CC_1^\infty\simeq \CC^\infty$ defined in Example~\ref{15} is a toric ind-variety. Furthermore, since the algebra of regular functions of $\CC^i$ is $\CC[\ZZ_{\geq 0}^i]$ we obtain that $\S(\CC_1^\infty)=\ZZ_{\geq 0}^\omega$, see also Example~\ref{100}.

We take advantage of this example to show that in general $\C[\S]\subsetneq\C[\V]$. To do so, we show that $\CC[\ZZ_{\geq 0}^\omega]$ is not a complete topological ring. Recall that 
$$\CC[\ZZ_{\geq 0}^\omega]=\bigoplus_{m\in \ZZ_{\geq 0}^\omega}\CC\chi^m\,.$$
We also let 
$$x_i=\chi^m, \mbox{ with } m=(\underbrace{1,1,\ldots,1}_{i\mbox{-times}},0,0,\ldots), \quad f_1=x_1, \quad\mbox{and}\quad f_i=\frac{x_i}{2^{i-1}}+\sum_{k=1}^{i-1} \frac{x_k}{2^{k}}\,.$$
The sequence $(f_i)_{i>0}$ is Cauchy since
$$f_j-f_i=\frac{x_j}{2^{j-1}}+\sum_{k=i}^{j-1} \frac{x_k}{2^{k}}-\frac{x_i}{2^{i-1}} \quad \mbox{for all } j>i\,,$$
and a straightforward computation shows that 
$$\pi_{\ell}(f_j-f_i)=\left(\frac{1}{2^{j-1}}+\sum_{k=i}^{j-1} \frac{1}{2^{k}}-\frac{1}{2^{i-1}}\right)\chi^{m'}=0 \quad \mbox{for all } j>i\mbox{ and }  i,j > \ell\,.$$
Here $m'=(1,1,\ldots,1)\in \ZZ^\ell$ and  $\pi_\ell\colon \CC[\ZZ_{\geq 0}^\omega]\rightarrow \CC[\ZZ_{\geq 0}^\ell]$ is the natural morphism coming from the projective limit that in this case corresponds to the semigroup homomorphism $\ZZ_{\geq 0}^\omega\rightarrow \ZZ_{\geq 0}^\ell$ restriction to the $\ell$-th first coordinates.
Finally, the sequence  is not convergent in $\CC[\ZZ_{\geq 0}^\omega]$ since the limit is an infinite sum that cannot belong to the direct sum in the definition of $\CC[\ZZ_{\geq 0}^\omega]$.
\end{example}

\begin{example}
Let $\V$ be the affine toric ind-variety $V_1\hookrightarrow V_2\hookrightarrow\ldots$, where  $V_i$ is the $i$-dimensional affine toric variety given in $\CC^{i+1}$ with coordinates $(y,x_1,\ldots,x_i)$ by the equation $y^2=x_1\cdots x_i$, where $V_i\hookrightarrow V_{i+1}$ is the closed embedding given by the toric map $(y,x_1,\ldots,x_i)\mapsto (y,x_1,\ldots,x_i,1)$. 

From the equation $y^2=x_1\cdots x_i$ we obtain that the embedding of the acting torus of $V_i$ in the acting torus of $\CC^{i+1}$ corresponds to the homomorphism of the character lattices $\ZZ^{i+1}=M(\CC^{i+1})\rightarrow M(V_i)=\ZZ^{i}$ 
 given by the matrix
$$
\begin{pmatrix}
1 & 2  & 0 & 0 &  \ldots & 0 \\
0 & -1 & 1 & 0 &  \ldots & 0 \\
0 & -1 & 0 & 1 &   & 0 \\
\vdots & \vdots & \vdots & \vdots &  \ddots & \vdots \\
0 & -1 & 0 & 0 &  \ldots & 1 
\end{pmatrix}\,.
$$
Hence, we obtain that $V_i=\spec \CC[S_i]$ where $S_i=\omega_i\cap \ZZ^i$ and $\omega_i$ is the cone spanned in $\RR^i$ by the rays
$$
\begin{pmatrix}
2 \\ -1 \\ -1 \\ \vdots \\ -1  
\end{pmatrix},
\begin{pmatrix}
0 \\ 1 \\ 0 \\ \vdots \\ 0  
\end{pmatrix},
\begin{pmatrix}
0 \\ 0 \\ 1 \\ \vdots \\ 0  
\end{pmatrix},\ldots, \begin{pmatrix}
0 \\ 0 \\ 0 \\ \vdots \\ 1  
\end{pmatrix}\,.
$$
The semigroup $S_i$ is thus given in $\ZZ^i$ by 
$$S_i=\big\{(m_1,\ldots, m_i)\in \ZZ^i \mid m_1\geq 0, \mbox{ and } m_1+2m_j\geq 0, \mbox{ for all }  j=2,3,\ldots,i\big\}\,. $$
Furthermore, the closed embedding $V_i\hookrightarrow V_{i+1}$ corresponds to the map $\ZZ^{i+1}\rightarrow \ZZ^i$ of the respective character lattices given by the matrix
$$
\begin{pmatrix}
1 & 0 & 0 &  \ldots & 0 & 0 \\
0 & 1 & 0 &  \ldots & 0 & 0 \\
0 & 0 & 1 &   & 0 \\
\vdots & \vdots & \vdots &  \ddots & \vdots & \vdots \\
0 & 0 & 0 &  \ldots & 1 & 0 
\end{pmatrix}\,.
$$
Taking the projective limit we obtain that the pro-affine semigroup $\S$ corresponding to the affine toric ind-variety $\V$ is given by
$$\S=\big\{(m_1,m_2,\ldots)\in \ZZ^\omega \mid m_1\geq 0, \mbox{ and } m_1+2m_j\geq 0, \mbox{ for all }  j\geq 2\big\}\,. $$
\end{example}

\end{document}